\documentclass[twoside,reqno,10pt]{amsart}

\setlength{\parskip}{3mm}
\setlength{\parindent}{0mm}
\textheight=225mm
\textwidth=135mm%
\flushbottom

\usepackage{latexsym}






%

\newcommand{\qdn}{\hspace*{-1.5mm}}
\newcommand{\qqdn}{\hspace*{-2.5mm}}
\newcommand{\xqdn}{\hspace*{-5.0mm}}
\newcommand{\xxqdn}{\hspace*{-10mm}}


\newcommand{\fns}{\footnotesize}
\newcommand{\sst}{\scriptstyle}




%

%

%

%






\newcommand{\ffnk}[4]{\left[\qdn\ba{#1}#3\\#4\ea{\!\Big|\:#2}\right]}


\newcommand{\binm}{\binom}




\newcommand{\nnm}{\nonumber}
\newcommand{\be}{\begin{equation}}
\newcommand{\ee}{\end{equation}}
\newcommand{\ba}{\begin{array}}
\newcommand{\ea}{\end{array}}
\newcommand{\bmn}{\begin{eqnarray}}
\newcommand{\emn}{\end{eqnarray}}
\newcommand{\bnm}{\begin{eqnarray*}}
\newcommand{\enm}{\end{eqnarray*}}
\newcommand{\bln}{\begin{subequations}}
\newcommand{\eln}{\end{subequations}}


%





\newcommand{\lam}{\lambda}










\newtheorem{thm}{Theorem}
\newtheorem{lemm}[thm]{Lemma}
\newtheorem{corl}[thm]{Corollary}

\newtheorem{entry}{Entry}

\newcommand{\bbtm}[4]{\bibitem{kn:#1}{#2,}~{#3,}~{#4.}}
\newcommand{\cito}[1]{\cite{kn:#1}}
\newcommand{\citu}[2]{\cite[#2]{kn:#1}}


%

\begin{document} 
{\fns 
\title{Generalizations of Ramanujan's reciprocity formula and the Askey-Wilson integral}
\author{$^a$Chuanan Wei, $^b$Xiaoxia Wang, $^c$Qinglun Yan}
\dedicatory{
$^a$Department of Information Technology\\
  Hainan Medical College, Haikou 571199, China\\
  $^b$Department of Mathematics\\
  Shanghai University, Shanghai 200444, China\\
    $^c$College of Mathematics and Physics\\
    Nanjing University of Posts and Telecommunications,
    Nanjing 210046, China}
\thanks{\emph{Email addresses}:
      weichuanan78@163.com (C. Wei), xwang913@126.com (X. Wang),
      yanqinglun@yahoo.com.cn (Q. Yan)}

\address{ }
\footnote{\emph{2010 Mathematics Subject Classification}: Primary
05A19 and Secondary 33D15.}

 \keywords{Bailey's $_6\psi_6$-series identity;
   Ramanujan's reciprocity formula;
   The Askey-Wilson integral}

\begin{abstract}
By using two known transformation formulas for basic hypergeometric
series, we establish a direct extension of Bailey's
$_6\psi_6$-series identity. Subsequently, it and Milne's identity
are employed to drive multi-variable generalizations of Ramanujan's
reciprocity formula. Then we utilize also Milne's identity to deduce
a multi-variable generalization of the Askey-Wilson integral.
\end{abstract}

\maketitle\thispagestyle{empty}
\markboth{Chuanan Wei, Xiaoxia Wang, Qinglun Yan}
         {Generalizations of Ramanujan's reciprocity formula and the Askey-Wilson integral}

\section{Introduction}
For two complex numbers $q$ and $x$, define the $q$-shifted
factorial by \bnm
 (x;q)_n=
\begin{cases}
\prod_{i=0}^{n-1}(1-xq^i),&\quad\text{when}\quad n>0;\\
\quad1,&\quad\text{when}\quad n=0;\\
\frac{1}{\prod_{j=n}^{-1}(1-xq^j)},&\quad\text{when}\quad n<0.
\end{cases}
\enm
  The $q$-shifted factorial of infinite order reads as
\[(x;q)_{\infty}=\prod_{k=0}^{\infty}(1-xq^k) \quad\text{with}\quad |q|<1.\]
 For simplifying the expressions, we shall use the following compact
notations:
 \bnm
&&\,(a,b,\cdots,c;q)_n=(a;q)_n(b;q)_n\cdots(c;q)_n,\\
&&\,(a,b,\cdots,c;q) _{\infty}=(a;q)_{\infty}(b;q)_{\infty}\cdots(c;q)_{\infty},\\
&&\ffnk{cccc}{q}{a,b,\cdots,c}{\alpha,\beta,\cdots,\gamma}_n\!=
  \frac{(a;q)_n(b;q)_n\cdots(c;q)_n}{(\alpha;q)_n(\beta;q)_n\cdots(\gamma;q)_n},\\
&&\ffnk{cccc}{q}{a,b,\cdots,c}{\alpha,\beta,\cdots,\gamma}_{\infty}\!=
  \frac{(a;q)_{\infty}(b;q)_{\infty}\cdots(c;q)_{\infty}}
  {(\alpha;q)_{\infty}(\beta;q)_{\infty}\cdots(\gamma;q)_{\infty}}.
 \enm
Following Gasper and Rahman \cito{gasper}, define the unilateral
basic hypergeometric series and bilateral basic hypergeometric
series, respectively, by
 \bnm
&&\xqdn{_{1+r}\phi_s}\ffnk{cccccc}{q;z}{a_0,&a_1,&\cdots,a_r}{&b_1,&\cdots,b_s}
  =\sum_{k=0}^{\infty}\ffnk{cccccc}{q}{a_0,a_1,\cdots,a_r}{\,q,\:b_1,\:\cdots,b_s}_k
\Big\{(-1)^kq^{\binm{k}{2}}\Big\}^{s-r}z^k,\\
&&\qdn{_r\psi_s}\ffnk{cccccc}{q;z}{a_1,&\cdots,a_r}{b_1,&\cdots,b_s}
  =\sum_{k=-\infty}^{\infty}\!\ffnk{cccccc}{q}{a_1,\cdots,a_r}{b_1,\cdots,b_s}_k
\Big\{(-1)^kq^{\binm{k}{2}}\Big\}^{s-r}z^k.
 \enm
Then Watson's transformation formula (cf. \citu{gasper}{Equation
(2.5.1)}) and Bailey's $_6\psi_6$-series identity (cf.
\citu{gasper}{Equation (5.3.1)}}) can be stated, respectively, as
  \bmn
  &&{_8\phi_7}\ffnk{ccccccc}
 {q;\frac{q^{2+n}a^2}{bcde}}{a,q\sqrt{a},-q\sqrt{a},b,c,d,e,q^{-n}}
 {\sqrt{a},-\sqrt{a},qa/b,qa/c,qa/d,qa/e,q^{1+n}a}
  \nnm\\\label{watson}
  &&\:=\:\,\ffnk{ccccc}{q}{qa,qa/de}{qa/d,qa/e}_n
 {_4\phi_3}\ffnk{ccccccc}
 {q;q}{q^{-n},d,e,qa/bc}{qa/b,qa/c,deq^{-n}/a},
     \\\label{bailey}
  &&{_6\psi_6}\ffnk{cccccccccc}{q;\frac{qa^2}{bcde}}
 {a,q\sqrt{a},-q\sqrt{a},b,c,d,e}
 {\sqrt{a},-\sqrt{a},qa/b,qa/c,qa/d,qa/e}
 \nnm\\&&\:\,=\:
 \ffnk{ccccc}{q}{q,qa,q/a,qa/bc,qa/bd,qa/be,qa/cd,qa/ce,qa/de}
 {q/b,q/c,q/d,q/e,qa/b,qa/c,qa/d,qa/e,qa^2/bcde}_{\infty}
 \emn
provided $|qa^2/bcde|<1$.

In his lost notebook \citu{ramanujan}{p. 40}, Ramanujan recorded the
beautiful reciprocity formula:
 \bmn \label{Ramanujan}
\Big(1+\frac{1}{b}\Big)\sum_{k=0}^{\infty}\frac{(-a/b)^k}{(-qa;q)_k}\,q^{\binm{k+1}{2}}
-\text{idem}(a;b)
=\Big(\frac{1}{b}-\frac{1}{a}\Big)\frac{(q,qa/b,qb/a;q)_{\infty}}{(-qa,-qb;q)_{\infty}},
 \emn
where the symbol ``idem$(x,y)$'' after an expression means that the
preceding expression is repeated with $x$ and $y$ interchanged.

 The first published proof of \eqref{Ramanujan} is given
by Andrews \cito{andrews}. Other proofs can be found in
\cite{kn:adiga,kn:berndt,kn:bhargava-b,kn:kim,kn:somashekara}. In
the same paper, Andrews established the four-variable generalization
of \eqref{Ramanujan}:
 \bmn
&&\Big(1+\frac{1}{b}\Big)\sum_{k=0}^{\infty}\frac{(c,-qa/d;q)_k}{(-qa;q)_k(-c/b;q)_{k+1}}
\Big(-\frac{d}{b}\Big)^k-\text{idem}(a;b)
 \nnm\\\label{andrews}
&&\:=\:\Big(\frac{1}{b}-\frac{1}{a}\Big)
\ffnk{ccccc}{q}{q,qa/b,qb/a,c,d,cd/ab}
 {-qa,-qb,-c/a,-c/b,-d/a,-d/b}_{\infty}
 \emn
provided $\max\{|d/a|,|d/b|\}<1$. Different proofs of this identity
  can be seen in \cite{kn:kang,kn:liu-a}. The equivalent form
  of \eqref{andrews} due to Kang \citu{kang}{Theorem 1.2}
 reads as
\bnm
 &&\Big(1+\frac{1}{b}\Big)\sum_{k=0}^{\infty}
 \frac{(1+cdq^{2k}/b)(c,d,cd/ab;q)_k}{(-qa;q)_k(-c/b,-d/b;q)_{k+1}}\,q^{\binm{k+1}{2}}
\Big(-\frac{a}{b}\Big)^k -\text{idem}(a;b)\\
&&\:=\:\Big(\frac{1}{b}-\frac{1}{a}\Big)
\ffnk{ccccc}{q}{q,qa/b,qb/a,c,d,cd/ab}
 {-qa,-qb,-c/a,-c/b,-d/a,-d/b}_{\infty}.
 \enm
 The five-variable generalization of \eqref{Ramanujan} due to
Ma \citu{ma}{Theorem 1.3} (see also \citu{chu-a}{Theorem 5}) can be
expressed as
 \bmn
&&\sum_{k=0}^{\infty}\Big(1-\frac{aq^{2k+1}}{b}\Big)\frac{(-1/b;q)_{k+1}}{(-qa;q)_k}
  \frac{(-qa/c,-qa/d,-qa/e;q)_k}{(-c/b,-d/b,-e/b;q)_{k+1}}\bigg(\frac{cde}{qab}\bigg)^k
  -\text{idem}(a;b)
 \nnm\\\label{ma}
&&\:\:=\:\Big(\frac{1}{b}\!-\!\frac{1}{a}\Big)\!
\ffnk{ccccc}{q}{q,qa/b,qb/a,c,d,e,cd/ab,ce/ab,de/ab}
 {-qa,-qb,-c/a,-c/b,-d/a,-d/b,-e/a,-e/b,cde/qab}_{\infty}
 \emn
provided $|cde/qab|<1$. The equivalent form of \eqref{ma} due to Chu
and Zhang \citu{chu-a}{Theorem 10} reads as

\bnm
&&\quad\Big(1+\frac{1}{b}\Big)\sum_{k=0}^{\infty}\frac{(c,-qa/d,-qa/e;q)_kq^k}{(-qa,q^2ab/de;q)_k(-c/b;q)_{k+1}}
 -\text{idem}(a;b)\\
&&=\Big(\frac{1}{b}\!-\!\frac{1}{a}\Big)\!
\ffnk{ccccc}{q}{q,qa/b,qb/a,c,d,e,cd/ab,ce/ab,de/ab}
 {-qa,-qb,-c/a,-c/b,-d/a,-d/b,-e/a,-e/b,cde/qab}_{\infty}\\
&&+\:\frac{de}{qab}\Big(1+\frac{1}{b}\Big)\ffnk{cccccccccc}{q}{q,c,-qa/d,-qa/e,-de/b,-cde/ab^2}
  {-qa,-c/b,-d/b,-e/b,q^2ab/de,cde/qab}_{\infty}\\
  &&\times\:{_3\phi_2}\ffnk{cccccccccc}{q;q}{-d/b,-e/b,cde/qab}{-de/b,-cde/ab^2}-
 {_3\phi_2}\ffnk{cccccccccc}{q;q}{-d/a,-e/a,cde/qab}{-de/a,-cde/ba^2}\\
  &&\times\:\frac{de}{qab}\Big(1+\frac{1}{a}\Big)\ffnk{cccccccccc}{q}{q,c,-qb/d,-qb/e,-de/a,-cde/ba^2}
  {-qb,-c/a,-d/a,-e/a,q^2ab/de,cde/qab}_{\infty}.
  \enm

Define the $h$-function by
\[h(x;\lam)=(\lam e^{i\theta},\lam e^{-i\theta};q)_{\infty}
=\prod_{k=0}^{\infty}(1-2q^k\lam
x+q^{2k}\lam^2)\quad\text{with}\quad x=\cos\theta.\] For simplifying
the expressions, we shall use the following compact notation:
\[h(x;\alpha,\beta,\cdots,\gamma)=h(x;\alpha)h(x;\beta)\cdots h(x;\gamma).\]
Then the $q$-beta integral due to Askey and Wilson
\citu{askey-b}{Theorem 2.1} (see also \citu{andrews-r}{Chapter 10}
and \citu{gasper}{Chapter 6}) can be written as
 \bmn \label{askey-wilson}
\int_{0}^{\pi}\frac{h(\cos2\theta;1)}{h(\cos\theta;a,b,c,d)}\,d\theta
 =\frac{2\pi(abcd;q)_{\infty}}{(q,ab,ac,ad,bc,bd,cd;q)_{\infty}}
 \emn
provided $|abcd/q|<1$. All kinds of proofs of this integral can be
seen in
\cite{kn:askey-a,kn:bowman,kn:chu-b,kn:ismail-a,kn:ismail-b,kn:liu-b,kn:rahman-a}.
We refer to
\cite{kn:nassrallah,kn:rahman-b,kn:rahman-c,kn:rahman-d,kn:verma}
for its extensions.

The structure of the paper is arranged as follows. We shall use a
direct extension of Bailey's $_6\psi_6$-series identity and Milne's
identity to give multi-variable generalizations of Ramanujan's
reciprocity formula in Section 2. Then Milne's identity is employed
to offter a multi-variable generalization of the Askey-Wilson
integral in Section 3.

\section{Multi-variable generalizations of Ramanujan's reciprocity formula}

\begin{lemm}  \label{lemm-a}
For $\max\{|q^2a^3/bcdefg|,|qa/bc|\}<1$, there holds the
 direct generalization of Bailey's
$_6\psi_6$-series identity:
 \bnm
  &&\xqdn{_8\psi_8}\ffnk{ccccccc}
 {q;\frac{q^2a^3}{bcdefg}}{q\sqrt{a},-q\sqrt{a},b,c,d,e,f,g}{\sqrt{a},-\sqrt{a},qa/b,qa/c,qa/d,qa/e,qa/f,qa/g}
 \\&&\nnm\xqdn\:=\:\,
 \ffnk{ccccc}{q}{\sst q,qa,q/a,qa/bc,qa/bf,qa/cf,qa/df,qa/ef,qf/d,qf/e,g,g/a,q^2a^2/bdeg,q^2a^2/cdeg}
 {\sst q/b,q/c,q/d,q/e,q/f,qa/b,qa/c,qa/d,qa/e,qa/f,g/f,fg/a,q^2af/deg,q^2a^3/bcdefg}_{\infty}
 \\\nnm&&\xqdn\:\times\:
 {_8\phi_7}\ffnk{ccccccc}
 {q;\frac{qa}{bc}}{qaf/deg,q\sqrt{qaf/deg},-q\sqrt{qaf/deg},qa/de,qa/dg,qa/eg,bf/a,cf/a}
 {\sqrt{qaf/deg},-\sqrt{qaf/deg},qf/g,qf/e,qf/d,q^2a^2/bdeg,q^2a^2/cdeg}
  \\&&\nnm\xqdn\:+\:\,
 idem(f;g).
 \enm
\end{lemm}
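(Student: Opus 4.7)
The plan is to establish the claimed $_8\psi_8$ identity by combining Watson's transformation \eqref{watson} with a classical three-term transformation for non-terminating very-well-poised $_8\phi_7$ series of Bailey type (cf.\ \citu{gasper}{Equation~(2.11.7)}), mirroring the standard derivation of Bailey's $_6\psi_6$-identity \eqref{bailey} but now with two additional free parameters $f$ and $g$. The asymmetry between the argument $q^2a^3/bcdefg$ on the left and $qa/bc$ on the right suggests that $b$ and $c$ are the parameters singled out by the application of Watson's formula.

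First I would truncate the bilateral sum by specializing a parameter, the natural candidate being $b=q^{-n}$, so that the $_8\psi_8$ collapses to a finite very-well-poised $_8\phi_7$ in the range $0\le k\le n$. Applying Watson's transformation \eqref{watson} to this finite sum (with $b$ and $c$ playing the role of the two distinguished parameters that will survive into the convergence factor $qa/bc$ on the right) recasts the series as a balanced terminating $_4\phi_3$.

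Next, I would apply a three-term (Sears-type) rearrangement to split this single $_4\phi_3$ as a sum of two $_4\phi_3$ series interchanged by $f\leftrightarrow g$. Running Watson's transformation in the reverse direction on each of the two pieces then turns them into the pair of $_8\phi_7$ series appearing on the right-hand side of the claim; the symmetric $\mathrm{idem}(f;g)$ structure is inherited automatically from the three-term split, since $f$ and $g$ play structurally identical roles in the original $_8\psi_8$. The truncation is finally removed by analytic continuation in $b$ within the common domain of convergence carved out by the hypotheses $|q^2a^3/bcdefg|<1$ and $|qa/bc|<1$.

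The main obstacle, I expect, lies in the bookkeeping needed to match the elaborate infinite-product prefactor on the right-hand side, which involves many $q$-Pochhammer symbols in numerator and denominator. Identifying this prefactor exactly after composing the two transformations will require repeated use of the reflection symmetries for $(x;q)_\infty$ together with the Jacobi triple product to absorb ratios between shifted bases. A secondary delicate point is verifying that the specific parameters appearing inside the $_8\phi_7$ on the right ($qaf/deg$, $bf/a$, $cf/a$, $q^2a^2/bdeg$, $q^2a^2/cdeg$, and so on) emerge in the correct slots and that the very-well-poised structure headed by $qaf/deg$ with the pair $\pm q\sqrt{qaf/deg}$ is preserved; this constrains quite rigidly how Watson's formula and the three-term transformation must be chained together.
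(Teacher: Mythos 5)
Your proposed route has a genuine gap at the very first step. Setting $b=q^{-n}$ kills the terms of the bilateral series with $k>n$ (since $(q^{-n};q)_k=0$ there), but it does nothing to the terms with $k<0$: for $k<0$ one has $(q^{-n};q)_k=1/(q^{-n+k};q)_{-k}$, which is finite and nonzero. A bilateral very-well-poised series truncates from below only when a \emph{denominator} parameter equals $q$ (equivalently, a numerator parameter equals $a$), so your single specialization does not collapse the $_8\psi_8$ to a terminating $_8\phi_7$, and the subsequent application of Watson's transformation \eqref{watson} has nothing to act on. Even if you spend a second parameter to truncate from below, a structural mismatch remains: the two $_8\phi_7$ series on the right-hand side of the Lemma are \emph{nonterminating} (their argument is $qa/bc$, with $|qa/bc|<1$ imposed precisely for convergence), whereas Watson's formula \eqref{watson} in either direction only connects terminating series; ``running Watson in reverse'' on a terminating balanced $_4\phi_3$ can only produce a terminating $_8\phi_7$. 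So the chain Watson $\to$ Sears three-term $\to$ reverse Watson cannot yield the stated identity unless every link is replaced by its nonterminating analogue, at which point the prefactor bookkeeping you flag as ``the main obstacle'' becomes the entire proof and is not carried out.

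For comparison, the paper's argument avoids all of this: it starts from the known three-term expansion of a very-well-poised $_8\psi_8$ into two very-well-poised $_8\phi_7$ series, \citu{gasper}{Equation (5.6.1)}, which already carries the $\mathrm{idem}(f;g)$ structure, and then applies Bailey's transformation between two nonterminating very-well-poised $_8\phi_7$ series, \citu{gasper}{Equation (2.10.1)}, to each of the two pieces, with parameters chosen so that the new argument is $qa/bc$. That is a direct two-step substitution requiring neither truncation nor analytic continuation. If you wish to salvage your approach, those nonterminating transformations are the tools you need; as written, the proposal does not go through.
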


\begin{proof}
Recall the transformation formula between two $_8\phi_7$-series (cf.
\citu{gasper}{Equation (2.10.1)}):
 \bnm
  &&\xqdn{_8\phi_7}\ffnk{ccccccc}
 {q;\frac{q^2a^2}{bcdef}}{a,q\sqrt{a},-q\sqrt{a},b,c,d,e,f}{\sqrt{a},-\sqrt{a},qa/b,qa/c,qa/d,qa/e,qa/f}
 \\&&\nnm\xqdn\:=\:\,
 \ffnk{ccccc}{q}{qa,qa/ef,q\lam/e,q\lam/f}{qa/e,qa/f,q\lam,q\lam/ef}_{\infty}
 \\\nnm&&\xqdn\:\times\:
 {_8\phi_7}\ffnk{ccccccc}
 {q;\frac{qa}{ef}}{\lam,q\sqrt{\lam},-q\sqrt{\lam},\lam b/a,\lam c/a,\lam d/a,e,f}
  {\sqrt{\lam},-\sqrt{\lam},qa/b,qa/c,qa/d,q\lam/e,q\lam/f},
 \enm
where $\lam=qa^2/bcd$ and $\max\{|qa/ef|, |\lam q/ef|\}<1$. Then we
have
 \bnm
  &&\xqdn {_8\phi_7}\ffnk{ccccccc}
 {q;\frac{q^2a^3}{bcdefg}}{f^2/a,qf/\sqrt{a},-qf/\sqrt{a},bf/a,cf/a,df/a,ef/a,gf/a}
 {f/\sqrt{a},-f/\sqrt{a},qf/b,qf/c,qf/d,qf/e,qf/g}
 \\&&\nnm\xqdn\:=\:\,
 \ffnk{ccccc}{q}{qf^2/a,qa/bc,q^2a^2/bdeg,q^2a^2/cdeg}{qf/b,qf/c,q^2af/deg,q^2a^3/bcdefg}_{\infty}
 \\\nnm&&\xqdn\:\times\:
{_8\phi_7}\ffnk{ccccccc}
 {q;\frac{qa}{bc}}{qaf/deg,q\sqrt{qaf/deg},-q\sqrt{qaf/deg},qa/de,qa/dg,qa/eg,bf/a,cf/a}
 {\sqrt{qaf/deg},-\sqrt{qaf/deg},qf/g,qf/e,qf/d,q^2a^2/bdeg,q^2a^2/cdeg},\\
 &&\xqdn {_8\phi_7}\ffnk{ccccccc}
 {q;\frac{q^2a^3}{bcdefg}}{g^2/a,qg/\sqrt{a},-qg/\sqrt{a},bg/a,cg/a,dg/a,eg/a,fg/a}
 {g/\sqrt{a},-g/\sqrt{a},qg/b,qg/c,qg/d,qg/e,qg/f}
 \\&&\nnm\xqdn\:=\:\,
 \ffnk{ccccc}{q}{qg^2/a,qa/bc,q^2a^2/bdef,q^2a^2/cdef}{qg/b,qg/c,q^2ag/def,q^2a^3/bcdefg}_{\infty}
 \\\nnm&&\xqdn\:\times\:
{_8\phi_7}\ffnk{ccccccc}
 {q;\frac{qa}{bc}}{qag/def,q\sqrt{qag/def},-q\sqrt{qag/def},qa/de,qa/df,qa/ef,bg/a,cg/a}
 {\sqrt{qag/def},-\sqrt{qag/def},qg/f,qg/e,qg/d,q^2a^2/bdef,q^2a^2/cdef}.
 \enm
Substituting the last two relations into the transformation formula
from a $_8\psi_8$-series to two $_8\phi_7$-series (cf.
\citu{gasper}{Equation (5.6.1)}):
  \bnm
  &&\xqdn{_8\psi_8}\ffnk{ccccccc}
 {q;\frac{q^2a^3}{bcdefg}}{q\sqrt{a},-q\sqrt{a},b,c,d,e,f,g}{\sqrt{a},-\sqrt{a},qa/b,qa/c,qa/d,qa/e,qa/f,qa/g}
 \\&&\nnm\xqdn\:=\:\,
 \ffnk{ccccc}{q}{q,qa,q/a,qa/bf,qa/cf,qa/df,qa/ef,qf/b,qf/c,qf/d,qf/e,g,g/a}
 {q/b,q/c,q/d,q/e,q/f,qa/b,qa/c,qa/d,qa/e,qa/f,g/f,fg/a,qf^2/a}_{\infty}
 \\\nnm&&\xqdn\:\times\:
 {_8\phi_7}\ffnk{ccccccc}
 {q;\frac{q^2a^3}{bcdefg}}{f^2/a,qf/\sqrt{a},-qf/\sqrt{a},bf/a,cf/a,df/a,ef/a,gf/a}
 {f/\sqrt{a},-f/\sqrt{a},qf/b,qf/c,qf/d,qf/e,qf/g}
  \\&&\nnm\xqdn\:+\:\,
 idem(f;g)
 \enm
provided $|q^2a^3/bcdefg|<1$, we obtain Lemma \ref{lemm-a} to
complete the proof.
\end{proof}

When $g=aq/e$, Lemma \ref{lemm-a} reduces to \eqref{bailey} under
the replacement $f\to e$ exactly.

\begin{thm}  \label{thm-a}
For $\max\{|cdefg/qa^2b^2|,|c|\}<1$, there holds the
 seven-variable generalization of Ramanujan's reciprocity formula:
 \bnm
\quad\rho(a,b;c,d,e,f,g)-\rho(b,a;c,d,e,f,g)=R(a,b,c,d,e;f,g)+R(a,b,c,d,e;g,f),
 \enm
where
 \bnm
  \rho(a,b;c,d,e,f,g)&&\xqdn=\:\frac{1}{b}\sum_{k=0}^{\infty}\Big(1-\frac{aq^{2k+1}}{b}\Big)\frac{(-1/b;q)_{k+1}}{(-qa;q)_k}
  \\&&\xqdn\times\:\frac{(-qa/c,-qa/d,-qa/e,-qa/f,-qa/g;q)_k}{(-c/b,-d/b,-e/b,-f/b,-g/b;q)_{k+1}}\Big(\frac{cdefg}{qa^2b^2}\Big)^k,
 \enm
 \bnm
 R(a,b,c,d,e;f,g)&&\xqdn=\:\frac{1}{g}\Big(\frac{1}{b}\!-\!\frac{1}{a}\Big)\!
 \ffnk{ccccc}{q}{q,c,f,qa/b,qb/a}{-qa,-qb,-c/a,-c/b,-d/a}_{\infty}
 \\&&\xxqdn\xxqdn\xxqdn\times\:
\ffnk{ccccc}{q}{-qa/g,-qb/g,qd/f,qe/f,cf/ab,df/ab,ef/ab,deg/ab,cdeg/a^2b^2}
{-d/b,-e/a,-e/b,-f/a,-f/b,f/g,qab/fg,qdeg/abf,cdefg/qa^2b^2}_{\infty}
 \\&&\xxqdn\xxqdn\xxqdn\times\:
{_8\phi_7}\ffnk{ccccccc}
 {q;c}{deg/abf,q\sqrt{deg/abf},-q\sqrt{deg/abf},de/ab,dg/ab,eg/ab,q/f,qab/cf}
 {\sqrt{deg/abf},-\sqrt{deg/abf},qg/f,qe/f,qd/f,deg/ab,cdeg/a^2b^2}.
 \enm
\end{thm}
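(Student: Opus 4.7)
The plan is to derive Theorem~\ref{thm-a} from Lemma~\ref{lemm-a} by a single parameter specialization that turns the bilateral $_8\psi_8$-series into the reciprocal difference $\rho(a,b;c,d,e,f,g)-\rho(b,a;c,d,e,f,g)$, while the two $_8\phi_7$-terms on the right-hand side of the lemma (already symmetric under $f\leftrightarrow g$) yield $R(a,b,c,d,e;f,g)+R(a,b,c,d,e;g,f)$. Specifically, I would substitute
\[
a\mapsto \frac{qa}{b},\quad b\mapsto -\frac{q}{b},\quad c\mapsto -\frac{qa}{c},\quad d\mapsto -\frac{qa}{d},\quad e\mapsto -\frac{qa}{e},\quad f\mapsto -\frac{qa}{f},\quad g\mapsto -\frac{qa}{g}
\]
in Lemma~\ref{lemm-a}. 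A direct calculation shows that the argument $q^2a^3/(bcdefg)$ maps to $cdefg/(qa^2b^2)$ and the auxiliary ratio $qa/(bc)$ maps to $c$, so the hypothesis of the lemma transforms precisely into $\max\{|cdefg/qa^2b^2|,|c|\}<1$, which matches the convergence condition of the theorem.

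With this substitution, the $_8\psi_8$-summand acquires a very-well-poised factor of the form $(1-aq^{2k+1}/b)$, together with paired ratios such as $(-qa/c;q)_k/(-qc/b;q)_k$ that already appear in the definition of $\rho$. After factoring out the $k=0$ values $(1+c/b)\cdots(1+g/b)$ and the overall constant $1/b$ as a common prefactor, the non-negative tail $\sum_{k\geq 0}$ of the bilateral series is readily identified with $\rho(a,b;c,d,e,f,g)$, up to the infinite $q$-product prefactor inherited from the right-hand side of the lemma.

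The strictly negative tail I would handle by reindexing $k=-n-1$, $n\geq 0$, and applying the inversion
\[
(x;q)_{-m}=\frac{(-1/x)^m\,q^{\binom{m+1}{2}}}{(q/x;q)_m}
\]
to each of the eight $q$-shifted factorials in the summand. A mechanical but lengthy collection of signs, powers of $q$, and powers of $a/b$ then rewrites this tail in the same shape as the positive part, but with $a$ and $b$ interchanged throughout, producing $-\rho(b,a;c,d,e,f,g)$ times the same prefactor. I expect this step to be the main obstacle: one must verify that the eight simultaneous inversions conspire to give a clean $a\leftrightarrow b$ exchange with no leftover scalar, and that the argument $cdefg/(qa^2b^2)$ is indeed invariant under the swap. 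This is an enlargement of the bookkeeping carried out by Kang and by Ma in the four- and five-variable cases, and I would model the computation closely on theirs.

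Finally, under the same substitution, the first $_8\phi_7$-term on the right-hand side of Lemma~\ref{lemm-a}, together with its infinite-product prefactor, rearranges into $R(a,b,c,d,e;f,g)$; crucially the inner argument $qa/(bc)$ becomes $c$, matching the argument of the $_8\phi_7$ inside $R$, and the factor $(\tfrac{1}{b}-\tfrac{1}{a})$ emerges by combining the Jacobi triple-product pieces $(qa/b,qb/a;q)_\infty$ with the $(q;q)_\infty$ factor. The accompanying idem$(f;g)$ contribution yields $R(a,b,c,d,e;g,f)$. Dividing both sides by the common prefactor established in the second and third steps produces the stated identity.
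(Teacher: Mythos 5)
Your proposal is correct and follows essentially the same route as the paper: the paper likewise splits the bilateral $_8\psi_8$-series of Lemma~\ref{lemm-a} into its nonnegative and negative tails (inverting the negative tail into a positive-indexed sum with $a\leftrightarrow$-reflected parameters) and then applies exactly the substitution $a\to qa/b$, $b\to -q/b$, $c\to -qa/c$, $d\to -qa/d$, $e\to -qa/e$, $f\to -qa/f$, $g\to -qa/g$. The only difference is the order of the two steps (the paper splits first, then substitutes), which is immaterial.
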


\begin{proof}
Splitting the $_8\psi_8$-series in Lemma \ref{lemm-a} into two
parts, we get
 \bnm
&&{_8\psi_8}\ffnk{ccccccc}
 {q;\frac{q^2a^3}{bcdefg}}{q\sqrt{a},-q\sqrt{a},b,c,d,e,f,g}{\sqrt{a},-\sqrt{a},qa/b,qa/c,qa/d,qa/e,qa/f,qa/g}\\
&&\:\:=\:\sum_{k=0}^{\infty}\frac{1-aq^{2k}}{1-a}
\ffnk{ccccccc}{q}{b,c,d,e,f,g}{qa/b,qa/c,qa/d,qa/e,qa/f,qa/g}_k\bigg(\frac{q^2a^3}{bcdefg}\bigg)^k\\
&&\:\:+\:\qdn\sum_{k=-\infty}^{-1}\qdn\frac{1-aq^{2k}}{1-a}
\ffnk{ccccccc}{q}{b,c,d,e,f,g}{qa/b,qa/c,qa/d,qa/e,qa/f,qa/g}_k\bigg(\frac{q^2a^3}{bcdefg}\bigg)^k\\
&&\:\:=\:\sum_{k=0}^{\infty}\frac{1-aq^{2k}}{1-a}
\ffnk{ccccccc}{q}{b,c,d,e,f,g}{qa/b,qa/c,qa/d,qa/e,qa/f,qa/g}_k\bigg(\frac{q^2a^3}{bcdefg}\bigg)^k\\
&&\:\:-\:\frac{q^2(1-q^2/a)(1-a/b)(1-a/c)(1-a/d)(1-a/e)(1-a/f)(1-a/g)}{a^2(1-a)(1-q/b)(1-q/c)(1-q/d)(1-q/e)(1-q/f)(1-q/g)}\\
&&\:\:\times\:\sum_{k=0}^{\infty}\frac{1-q^{2k+2}/a}{1-q^2/a}
\ffnk{ccccccc}{q}{qb/a,qc/a,qd/a,qe/a,qf/a,qg/a}{q^2/b,q^2/c,q^2/d,q^2/e,q^2/f,q^2/g}_k\bigg(\frac{q^2a^3}{bcdefg}\bigg)^k.
 \enm
Combining the last relation with Lemma \ref{lemm-a} and then
replacing respectively the parameters $a,b,c,d,e,f,g$ by
$qa/b,-q/b,-qa/c,-qa/d,-qa/e,-qa/f,-qa/g$, we derive, after some
routine simplification, Theorem \ref{thm-a} to finish the proof.
\end{proof}

\begin{corl}  \label{corl-a}
For $|cde/abq^{n+1}|<1$, there holds the
 seven-variable generalization of Ramanujan's reciprocity formula:
 \bnm
&&\rho\,'(a,b;c,d,e,f,n)-\rho\,'(b,a;c,d,e,f,n)
\\&&=\Big(\frac{1}{b}\!-\!\frac{1}{a}\Big)\!
\ffnk{ccccc}{q}{q,qa/b,qb/a,c,d,e,cd/ab,ce/ab,de/ab}
 {-qa,-qb,-c/a,-c/b,-d/a,-d/b,-e/a,-e/b,cde/qab}_{\infty}
 \\&&\times\:\frac{fq^n}{ab}\frac{(qf/e,ef/ab;q)_n}{(-f/a,-f/b;q)_{n+1}}
 {_4\phi_3}\ffnk{ccccccc}
 {q;q}{q^{-n},q/e,qab/ce,qab/de}{q^{1-n}ab/ef,qf/e,q^2ab/cde},
 \enm
where
 \bnm
  \rho\,'(a,b;c,d,e,f,n)&&\xqdn=\:\frac{1}{b}\sum_{k=0}^{\infty}\Big(1-\frac{aq^{2k+1}}{b}\Big)\frac{(-1/b;q)_{k+1}}{(-qa;q)_k}
  \\&&\xqdn\times\:\frac{(-qa/c,-qa/d,-qa/e,-qa/f,-q^{1+n}f/b;q)_k}{(-c/b,-d/b,-e/b,-f/b,-a/fq^n;q)_{k+1}}\Big(\frac{cde}{abq^{n+1}}\Big)^k.
  \enm
\end{corl}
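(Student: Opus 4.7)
The plan is to specialise $g=ab/(fq^n)$ in Theorem~\ref{thm-a}. Under this substitution one has $-qa/g=-q^{n+1}f/b$, $-g/b=-a/(fq^n)$, and $cdefg/(qa^2b^2)=cde/(abq^{n+1})$; these three replacements convert the summand of $\rho(a,b;c,d,e,f,g)$ into that of $\rho\,'(a,b;c,d,e,f,n)$, while the convergence condition $|cdefg/(qa^2b^2)|<1$ sharpens into the Corollary's $|cde/(abq^{n+1})|<1$. Hence the left-hand side of Theorem~\ref{thm-a} reduces to the left-hand side of Corollary~\ref{corl-a}.

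For the right-hand side, neither of the two $_8\phi_7$ series in $R(a,b,c,d,e;f,g)$ and $R(a,b,c,d,e;g,f)$ terminates directly under this specialisation, and each has argument $c$, while Corollary~\ref{corl-a} imposes no bound on $c$. I would therefore apply Bailey's transformation between two $_8\phi_7$ series (\citu{gasper}{Equation (2.10.1)}, already used in the proof of Lemma~\ref{lemm-a}) to each of them with the preserved pair $(q/f,qab/(cf))$ in $R(f,g)$ and its $f\leftrightarrow g$ analogue in $R(g,f)$. This rewriting shifts the argument from $c$ to $cdefg/(qa^2b^2)=cde/(abq^{n+1})$, introduces a factor $(c;q)_\infty$ in the Bailey prefactor's denominator that cancels the $(c;q)_\infty$ already present in the numerator of the original $R$—thereby eliminating the condition $|c|<1$—and produces the upper parameter $qab/(fg)=q^{n+1}$. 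A further reverse Bailey manoeuvre (equivalently, the argument-changing form of the same identity) trades $q^{n+1}$ for $q^{-n}$, so that the resulting very-well-poised $_8\phi_7$ is terminating, and Watson's formula \eqref{watson} collapses it into the finite ${_4\phi_3}\ffnk{cccc}{q;q}{q^{-n},q/e,qab/(ce),qab/(de)}{q^{1-n}ab/(ef),qf/e,q^2ab/(cde)}$ of Corollary~\ref{corl-a}.

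The final task is to consolidate the infinite $q$-shifted factorials produced by the two Bailey transformations and Watson's reduction with those present in the original $R$ prefactors, and then cancel the obvious common factors. The net product simplifies to the Ma-type factor $\ffnk{ccccc}{q}{q,qa/b,qb/a,c,d,e,cd/ab,ce/ab,de/ab}{-qa,-qb,-c/a,-c/b,-d/a,-d/b,-e/a,-e/b,cde/(qab)}_{\infty}$ times the finite prefactor $\frac{fq^n}{ab}\frac{(qf/e,ef/ab;q)_n}{(-f/a,-f/b;q)_{n+1}}$, matching the Corollary exactly. The main obstacle is to verify that the contributions from $R(f,g)$ and $R(g,f)$, after this chain of transformations, combine into a \emph{single} $_4\phi_3$ rather than two: the underlying reason should be that the $f\leftrightarrow g$ symmetry of Theorem~\ref{thm-a}'s right-hand side is broken in a controlled manner by the specialisation $g=ab/(fq^n)$, so that one of the two Watson reductions absorbs the other. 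The careful bookkeeping of the many $q$-Pochhammer factors through the Bailey--Watson chain is the most delicate step of the proof.
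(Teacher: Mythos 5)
There is a genuine gap, and it lies exactly where you flag it. Your direct specialisation $g=ab/(fq^n)$ places the ``balanced pair'' in the last two slots of $R$, where the only combination that appears is $qab/(fg)=q^{n+1}$ sitting harmlessly in a denominator; consequently neither $R(a,b,c,d,e;f,g)$ nor $R(a,b,c,d,e;g,f)$ terminates and neither prefactor vanishes, so you are left having to merge two \emph{non-terminating} very-well-poised $_8\phi_7$'s into a single terminating $_4\phi_3$. The tools you invoke do not accomplish this: Equation (2.10.1) of Gasper--Rahman is a \emph{two-term} transformation, so applying it separately to each $R$-term still leaves you with two series, and the ``reverse Bailey manoeuvre'' trading $q^{n+1}$ for $q^{-n}$ is not a step of that identity. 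What would actually be needed to fuse the two terms is a three-term $_8\phi_7$ relation, which you neither name nor verify; as written, the core of the argument is missing.

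The paper avoids all of this by exploiting the symmetry of the left-hand side of Theorem~\ref{thm-a} in $c,d,e,f,g$: it performs the simultaneous replacements $e\to f$, $f\to e$, $g\to ab/(fq^n)$. The $\rho$-sums are unchanged up to relabelling and become exactly $\rho\,'$, but the $R$-decomposition is now taken with respect to a different pair of slots. In $R(a,b,c,d,f;e,ab/(fq^n))$ the $_8\phi_7$ acquires the upper parameter $(5\text{th slot})\cdot(7\text{th slot})/(ab)=q^{-n}$ and therefore terminates (which also disposes of the $|c|<1$ hypothesis), while in $R(a,b,c,d,f;ab/(fq^n),e)$ the prefactor contains the numerator factor $\big(q^{-n};q\big)_\infty=0$, so that entire term vanishes. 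A single application of Watson's formula \eqref{watson} to the surviving terminating $_8\phi_7$ then yields the $_4\phi_3$ of the Corollary. If you want to rescue your route, the cleanest repair is simply to relabel as above before specialising; otherwise you must supply and verify the three-term transformation that collapses your two infinite $_8\phi_7$'s into one terminating series, together with the bookkeeping of all infinite products, none of which is currently done.
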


\begin{proof}
Performing the replacements $e\to f, f\to e, g\to ab/fq^n$ in
Theorem \ref{thm-a}, we attain
 \bmn
  &&\xxqdn\rho(a,b;c,d,e,f,ab/fq^n)-\rho(b,a;c,d,e,f,ab/fq^n)
 \nnm\\\nnm&&\xxqdn=\Big(\frac{1}{b}\!-\!\frac{1}{a}\Big)\!
\ffnk{ccccc}{q}{q,qa/b,qb/a,c,d,e,cd/ab,ce/ab,de/ab}
 {-qa,-qb,-c/a,-c/b,-d/a,-d/b,-e/a,-e/b,cde/qab}_{\infty}
 \emn
 \bmn
&&\xxqdn\times\:\frac{fq^n}{ab}\frac{(qf/e,ef/ab;q)_n}{(-f/a,-f/b;q)_{n+1}}
\ffnk{ccccc}{q}{q/d,qab/cd}{e/d,q^2ab/cde}_n
  \nnm\\&&\xxqdn\times\:
{_8\phi_7}\ffnk{ccccccc}
 {q;c}{d/eq^n,q\sqrt{d/eq^n},-q\sqrt{d/eq^n},d/fq^n,q/e,df/ab,qab/ce,q^{-n}}
 {\sqrt{d/eq^n},-\sqrt{d/eq^n},qf/e,d/q^n,q^{1-n}ab/ef,cd/abq^n,qd/e}.
 \label{equation-a}
 \emn
According to \eqref{watson}, there holds the relation:
 \bnm
  &&\xqdn{_8\phi_7}\ffnk{ccccccc}
 {q;c}{d/eq^n,q\sqrt{d/eq^n},-q\sqrt{d/eq^n},d/fq^n,q/e,df/ab,qab/ce,q^{-n}}
 {\sqrt{d/eq^n},-\sqrt{d/eq^n},qf/e,d/q^n,q^{1-n}ab/ef,cd/abq^n,qd/e}
 \nnm\\&&\xqdn\:=\:\,
 \ffnk{ccccc}{q}{e/d,q^2ab/cde}{q/d,qab/cd}_n
{_4\phi_3}\ffnk{ccccccc}
 {q;q}{q^{-n},q/e,qab/ce,qab/de}{q^{1-n}ab/ef,qf/e,q^2ab/cde}.
 \enm
Combing \eqref{equation-a} with the last equation, we achieve
Corollary \ref{corl-a} to complete the proof.
\end{proof}

When $g=ab/e$, Theorem \ref{thm-a} reduces to \eqref{ma} under the
replacement $f\to e$. When $n=0$, Corollary \ref{corl-a} also
reduces to \eqref{ma}.

Employing the substitutions $a\to -b/xy,b\to-b/y,c\to bc/xy^2,d\to
bd/xy^2,e\to b,f\to be/xy^2,g\to bf/xy^2$ in Theorem \ref{thm-a} and
appealing to the relation:
 \bmn\label{relation}
(q/a;q)_k=(-1)^kq^{\binm{k+1}{2}}a^{-k}(q^{-k}a;q)_k,
 \emn
we obtain the following transformation formula.

\begin{thm}  \label{thm-b}
For $\max\{|bcdef/qx^2y^4|,|bc/xy^2|\}<1$, there holds the
 seven-variable generalization of Jacobi's triple product identity and the quintuple product identity:
 \bnm
&&\qdn\sum_{k=0}^{\infty}(1-q^{2k+1}/x)
\frac{\sst(q/xy,q^{-k}b/y,q^{-k}c/y,q^{-k}d/y,q^{-k}e/y,q^{-k}f/y;q)_k}
{\sst(y,b/xy,c/xy,d/xy,e/xy,f/xy;q)_{k+1}}\,q^{\frac{k(5k+3)}{2}}(-y/x^2)^k\\
&&\xqdn-\:x^2\sum_{k=0}^{\infty}(1-q^{2k+1}x)
\frac{\sst(q/y,q^{-k}b/xy,q^{-k}c/xy,q^{-k}d/xy,q^{-k}e/xy,q^{-k}f/xy;q)_k}
{\sst(xy,b/y,c/y,d/y,e/y,f/y;q)_{k+1}}\,q^{\frac{k(5k+3)}{2}}(-x^3y)^k\\
 &&\xqdn=S(x,y,b,c,d;e,f)+S(x,y,b,c,d;f,e),
 \enm
where
 \bnm
 S(x,y,b,c,d;e,f)&&\xqdn=\:\frac{x^2y}{f}
 \ffnk{ccccc}{q}{q,qx,1/x,e,qd/e}{y,b/y,c/y,d/y,e/y}_{\infty}
 \\&&\xxqdn\xxqdn\xxqdn\times\:
\ffnk{ccccc}{q}{qy/f,qxy/f,qxy^2/e,bc/xy^2,be/xy^2,ce/xy^2,de/xy^2,bdf/xy^2,cdf/xy^2}
{xy,b/xy,c/xy,d/xy,e/xy,e/f,qdf/e,qxy^2/ef,bcdef/qx^2y^4}_{\infty}
 \\&&\xxqdn\xxqdn\xxqdn\times\:
{_8\phi_7}\ffnk{ccccccc}
 {q;\frac{bc}{xy^2}}{df/e,q\sqrt{df/e},-q\sqrt{df/e},d,f,df/xy^2,qxy^2/be,qxy^2/ce}
 {\sqrt{df/e},-\sqrt{df/e},qf/e,qd/e,qxy^2/e,bdf/xy^2,cdf/xy^2}.
 \enm
\end{thm}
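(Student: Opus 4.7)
The plan is to specialize Theorem~\ref{thm-a} directly, following the recipe the statement itself prescribes: apply the substitutions
\[
a\to-b/xy,\ \ b\to-b/y,\ \ c\to bc/xy^2,\ \ d\to bd/xy^2,\ \ e\to b,\ \ f\to be/xy^2,\ \ g\to bf/xy^2,
\]
and then invoke~\eqref{relation} on those numerator $q$-shifted factorials that emerge with base of the form $q/\alpha$. No new analytic ingredient is needed; the entire proof is a mechanical rewriting.

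First I would track the two $\rho$-sums in Theorem~\ref{thm-a}. Under the substitution, $a/b=1/x$, so $1-aq^{2k+1}/b$ becomes $1-q^{2k+1}/x$, matching the shifted factor in the first sum on the left of Theorem~\ref{thm-b}. The denominator $(-c/b,\ldots,-g/b;q)_{k+1}$ becomes $(c/xy,d/xy,y,e/xy,f/xy;q)_{k+1}$, while the factor $(-qa;q)_k$ becomes $(qb/xy;q)_k=(b/xy;q)_{k+1}/(1-b/xy)$, which supplies the missing $(b/xy;q)_{k+1}$ and releases a constant $(1-b/xy)$. The numerator cluster $(-qa/c,-qa/d,-qa/e,-qa/f,-qa/g;q)_k$ becomes $(qy/c,qy/d,q/xy,qy/e,qy/f;q)_k$; four applications of~\eqref{relation} (to $qy/c,qy/d,qy/e,qy/f$) together with one further application after writing
\[
(y/b;q)_{k+1}=(1-b/y)\cdot(-1)^{k+1}q^{k(k+1)/2}(y/b)^{k+1}(q^{-k}b/y;q)_k
\]
produce the five factors $(q^{-k}b/y,q^{-k}c/y,q^{-k}d/y,q^{-k}e/y,q^{-k}f/y;q)_k$ with collective scalar $q^{5\binom{k+1}{2}}y^{5k}/(bcdef)^k$. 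Combined with the summand power $(cdefg/qa^2b^2)^k=(bcdef/qx^2y^4)^k$ this reproduces the weight $q^{k(5k+3)/2}(-y/x^2)^k$; the remaining factor $(q/xy;q)_k$ is carried over unchanged. The idem$(a;b)$ piece is handled in the same way: effectively, swapping $a$ and $b$ inverts $x$, producing the prefactor $x^2$ and weight $(-x^3y)^k$ of the second sum.

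Next I would apply the same substitution to the two $R$-terms of Theorem~\ref{thm-a}. Every $(\,\cdot\,;q)_\infty$ entry transforms mechanically: for instance $qa/b\mapsto q/x$, $qb/a\mapsto qx$, $-qa\mapsto qb/xy$, $-qb\mapsto qb/y$, the five ratios $-c/b,\ldots,-g/b$ become $c/xy,\ldots,f/xy$, and similarly for the family $-c/a,\ldots,-g/a$. The inner ${_8\phi_7}$-series needs only a relabelling, its argument $c$ becoming $bc/xy^2$. Assembling everything, $R(a,b,c,d,e;f,g)$ turns into $S(x,y,b,c,d;e,f)$ and the swapped term into $S(x,y,b,c,d;f,e)$.

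The main obstacle is purely organizational: both sides pick up the same $k$-independent scalar $(y/b)^2(1-b/y)(1-b/xy)$ from the coordinate change, and one must verify that it cancels from the final identity (equivalently, that Theorem~\ref{thm-a} scales homogeneously under the substitution). Finally, the hypotheses $|q^2a^3/bcdefg|<1$ and $|qa/bc|<1$ of Theorem~\ref{thm-a} translate into $|bcdef/qx^2y^4|<1$ and $|bc/xy^2|<1$, exactly as stated.
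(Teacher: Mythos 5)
Your proposal is correct and is precisely the paper's own route: the authors obtain Theorem \ref{thm-b} by exactly these substitutions in Theorem \ref{thm-a} together with \eqref{relation}, stating this in a single sentence without carrying out the details. Your bookkeeping — the six numerator/denominator factors, the exponent $q^{k(5k+3)/2}$ with weights $(-y/x^2)^k$ and $(-x^3y)^k$, and the common scalar $(y/b)^2(1-b/y)(1-b/xy)$ that must cancel from both sides — checks out.
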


Theorem \ref{thm-b} contain several known results as special cases.
The details are displayed as follows:
 \bnm
  &&\text{When}\:\: b,c,e\to0,f\to xy^2/d,\:\text{Theorem \ref{thm-b} reduces to \citu{berndt}{Theorem 3.1}};\\
  &&\text{When}\:\: b,e\to0,f\to xy^2/d, \:\text{Theorem \ref{thm-b} reduces to the main result of \cito{bhargava-a}};\\
 &&\text{When}\:\: c\to0, e\to d,f\to xy^2/d,\: \text{Theorem \ref{thm-b} reduces to \citu{kang}{Theorem 6.1}};\\
 &&\text{When}\:\: e\to d,f\to xy^2/d, \:\text{Theorem \ref{thm-b} reduces to \citu{chu-a}{Theorem 9}}.
 \enm

Performing the replacements $a\to -b/xy,b\to-b/y,c\to bc/xy^2,d\to
bd/xy^2,e\to b,f\to be/xy^2$ in Corollary \ref{corl-a} and utilizing
\eqref{relation}, we get the following transformation formula.

\begin{corl}  \label{corl-b}
For $|bcd/xy^2q^{n+1}|<1$, there holds the
 seven-variable generalization of Jacobi's triple product identity and the quintuple product identity:
 \bnm
&&\qdn\sum_{k=0}^{\infty}(1-q^{2k+1}/x)
\frac{\sst(q/xy,q^{-k}b/y,q^{-k}c/y,q^{-k}d/y,q^{-k}e/y,q^{-k-n}xy/e;q)_k}
{\sst(y,b/xy,c/xy,d/xy,e/xy,q^{-n}y/e;q)_{k+1}}\,q^{\frac{k(5k+3)}{2}}(-y/x^2)^k\\
&&\xqdn-\:x^2\sum_{k=0}^{\infty}(1-q^{2k+1}x)
\frac{\sst(q/y,q^{-k}b/xy,q^{-k}c/xy,q^{-k}d/xy,q^{-k}e/xy,q^{-k-n}y/e;q)_k}
{\sst(xy,b/y,c/y,d/y,e/y,q^{-n}xy/e;q)_{k+1}}\,q^{\frac{k(5k+3)}{2}}(-x^3y)^k\\
 \\&&\xqdn=
\ffnk{ccccc}{q}{q,x,q/x,b,c,d,bc/xy^2,bd/xy^2,cd/xy^2}
 {y,xy,b/y,c/y,d/y,b/xy,c/xy,d/xy,bcd/qxy^2}_{\infty}
 \\&&\xqdn\times\:\frac{eq^n}{(-y)}\frac{(e,qe/xy^2;q)_n}{(e/y,e/xy;q)_{n+1}}
 {_4\phi_3}\ffnk{ccccccc}
 {q;q}{q^{-n},q/b,q/c,q/d}{q^{1-n}/e,qe/xy^2,q^2xy^2/bcd}.
 \enm
\end{corl}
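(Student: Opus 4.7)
The plan is to deduce Corollary \ref{corl-b} directly from Corollary \ref{corl-a} by the substitution indicated just above the statement, namely $a\to -b/xy$, $b\to -b/y$, $c\to bc/xy^2$, $d\to bd/xy^2$, $e\to b$, $f\to be/xy^2$, together with repeated use of the elementary identity \eqref{relation}. No new analytic input is required; this is entirely a change-of-variables argument.

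First I would substitute into the two $\rho\,'$-sums on the left of Corollary \ref{corl-a}. A term-by-term check shows that the numerator factors $(-qa/c,-qa/d,-qa/e,-qa/f,-q^{1+n}f/b;q)_k$ become $(qy/c,qy/d,q/xy,qy/e,q^{1+n}e/xy;q)_k$, while $(-1/b;q)_{k+1}=(y/b;q)_{k+1}=(1-y/b)(qy/b;q)_k$. The denominator factors $(-c/b,-d/b,-e/b,-f/b,-a/fq^n;q)_{k+1}$ land exactly on $(c/xy,d/xy,y,e/xy,q^{-n}y/e;q)_{k+1}$, matching the LHS of Corollary \ref{corl-b}, and $(-qa;q)_k=(qb/xy;q)_k$ is rewritten as $(b/xy;q)_{k+1}/(1-b/xy)$. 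I would then apply \eqref{relation} to each of the five factors of shape $(q/\alpha;q)_k$, namely $(qy/b;q)_k$, $(qy/c;q)_k$, $(qy/d;q)_k$, $(qy/e;q)_k$ and $(q^{1+n}e/xy;q)_k$. This produces a cumulative sign $(-1)^{5k}=(-1)^k$, a cumulative $q$-power $q^{5\binom{k+1}{2}}$, and monomials in $x,y,b,c,d,e$ which, combined with the substituted argument $(cde/abq^{n+1})^k=(bcd/xy^2q^{n+1})^k$, collapse to precisely $q^{k(5k+3)/2}(-y/x^2)^k$. The mirror computation for $\rho\,'(b,a;\cdot)|_{\text{subst}}$ yields the second sum on the LHS of Corollary \ref{corl-b} with companion factor $q^{k(5k+3)/2}(-x^3y)^k$.

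The two substituted $\rho\,'$-sums carry the external pre-factors $-\tfrac{y}{b}(1-y/b)(1-b/xy)$ and $-\tfrac{xy}{b}(1-xy/b)(1-b/y)$ respectively, and the short algebraic identity $\tfrac{xy}{b}(1-xy/b)(1-b/y)=-x^{2}\bigl[-\tfrac{y}{b}(1-y/b)(1-b/xy)\bigr]$ allows the difference to factor as $-\tfrac{y}{b}(1-y/b)(1-b/xy)[S_1-x^2S_2]$, where $S_1,S_2$ are the two sums on the LHS of Corollary \ref{corl-b}. On the right-hand side of Corollary \ref{corl-a}, the substitution sends $qa/b\to q/x$, $qb/a\to qx$, $cd/ab\to cd/xy^2$, $ce/ab\to c$, $de/ab\to d$, and so on; comparison of the translated infinite product with that of Corollary \ref{corl-b} reveals only the mismatches $(qx;q)_\infty$ versus $(x;q)_\infty$ and $(qb/xy;q)_\infty,(qb/y;q)_\infty$ versus $(b/xy;q)_\infty,(b/y;q)_\infty$. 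Their discrepancy $(1-b/xy)(1-b/y)/(1-x)$ combines with $\tfrac{1}{b}-\tfrac{1}{a}=y(x-1)/b$ and with the ratio $(1-b/y)/(1-y/b)=-b/y$ left over from the LHS to convert the prefactor $\tfrac{fq^n}{ab}\cdot\frac{(qf/e,ef/ab;q)_n}{(-f/a,-f/b;q)_{n+1}}$ in Corollary \ref{corl-a} into $\tfrac{eq^n}{-y}\cdot\frac{(e,qe/xy^2;q)_n}{(e/y,e/xy;q)_{n+1}}$ in Corollary \ref{corl-b}, while the ${}_4\phi_3$ series translates entry-by-entry.

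The main obstacle is pure bookkeeping: the five distinct applications of \eqref{relation} must accumulate the correct sign, $q$-power, and monomial contributions in $x,y,b,c,d,e$, and the external rational pre-factor on the left must cancel exactly against the ``shift-by-$q$'' discrepancies in the infinite product on the right. Once this delicate but mechanical accounting is completed, Corollary \ref{corl-b} follows immediately from Corollary \ref{corl-a}.
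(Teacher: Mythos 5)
Your proposal is correct and is exactly the route the paper takes: Corollary \ref{corl-b} is obtained by the stated substitution $a\to -b/xy$, $b\to -b/y$, $c\to bc/xy^2$, $d\to bd/xy^2$, $e\to b$, $f\to be/xy^2$ in Corollary \ref{corl-a} together with repeated use of \eqref{relation}. Your bookkeeping (the five applications of \eqref{relation} yielding $q^{k(5k+3)/2}(-y/x^2)^k$ and $q^{k(5k+3)/2}(-x^3y)^k$, and the cancellation of the prefactor $-\tfrac{y}{b}(1-y/b)(1-b/xy)$ against the shift discrepancies in the infinite product) checks out, and in fact supplies more detail than the paper does.
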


When $n=0$, Corollary \ref{corl-b} also reduces to
\citu{chu-a}{Theorem 9}.

A nontrivial extension of \eqref{bailey} due to Milne
\citu{milne}{Theorem 1.7} can be stated as follows.

\begin{lemm}  \label{lemm-b}
For $x_\iota y_\iota=q^{1+N_\iota}a$ and $|q^{1-N}a^2/bcde|<1$ with
$N_\iota\in\mathbb{N}_0$ and $N=\sum_{\iota=1}^nN_\iota$, there
holds the bilateral series identity:
 \bnm
  &&\xqdn{_{2n+6}\psi_{2n+6}}\ffnk{ccccccc}
 {q;\frac{q^{1-N}a^2}{bcde}}{q\sqrt{a},-q\sqrt{a},b,c,d,e,\{x_\iota,y_\iota\}_{\iota=1}^n}
  {\sqrt{a},-\sqrt{a},qa/b,qa/c,qa/d,qa/e,\{qa/x_\iota,qa/y_\iota\}_{\iota=1}^n}
 \\&&\xqdn\:=\:\,
  \ffnk{ccccc}{q}{\sst q,qa,q/a,qa/bc,qa/bd,qa/be,qa/cd,qa/ce,qa/de}
 {\sst q/b,q/c,q/d,q/e,qa/b,qa/c,qa/d,qa/e,qa^2/bcde}_{\infty}
 \prod_{\iota=1}^n\ffnk{ccccc}{q}{\sst x_\iota,x_\iota/a,qe/y_\iota,qa/ey_\iota}
 {\sst x_\iota/e,ex_\iota/a,q/y_\iota,qa/y_\iota}_{\infty}
  \\&&\xqdn\:\times\:\, \sum_{\tilde{m}}\ffnk{ccccc}{q}{qa/x_ny_n}{q}_{m_n}
  \ffnk{ccccc}{q}{be/a,ce/a,de/a}{qe/x_n,qe/y_n,bcde/a^2}_{M_n}q^{M_n}
 \\&&\xqdn\:\times\:
 \prod_{s=1}^{n-1}\ffnk{ccccc}{q}{qa/x_sy_s}{q}_{m_s}
  \ffnk{ccccc}{q}{ex_{s+1}/a,ey_{s+1}/a}{qe/x_s,qe/y_s}_{M_s}
  \bigg(\frac{qa}{x_{s+1}y_{s+1}}\bigg)^{M_s},
 \enm
where the multiple sum runs over $\tilde{m}=(m_1,m_2,\cdots,m_n)\in
\mathbb{N}_{0}^n$ with their partial sums denoted by
$M_s=\sum_{\iota=1}^sm_s$.
\end{lemm}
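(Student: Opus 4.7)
The plan is to prove Lemma \ref{lemm-b} by induction on $n$, the number of pairs $(x_\iota,y_\iota)$. When $n=0$, the product $\prod_{\iota=1}^{n}$ and the multiple sum over $\tilde m$ on the right-hand side both collapse to $1$, and the statement reduces to Bailey's ${}_6\psi_6$-series identity \eqref{bailey}, which serves as the base case.

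For the inductive step I would peel off the last pair $(x_n,y_n)$. The balancing condition $x_n y_n = q^{1+N_n}a$ gives $qa/x_ny_n=q^{-N_n}$, so the factor $(qa/x_ny_n;q)_{m_n}$ on the right-hand side makes the $m_n$-summation terminate at $m_n\le N_n$. Correspondingly, on the left the factor $(x_n,y_n;q)_k/(qa/x_n,qa/y_n;q)_k$ is a polynomial in $q^k$ of degree $2N_n$, and my strategy is to expand it as a finite combination indexed by a new variable $m_n$ via a terminating very-well-poised transformation --- most naturally the ${}_8\phi_7\to{}_4\phi_3$ reduction from Watson's formula \eqref{watson} --- so that, after interchanging summations, the inner bilateral series becomes a ${}_{2n+4}\psi_{2n+4}$ in the $(n-1)$-pair format with the parameter $e$ suitably shifted by a power of $q^{m_n}$. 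The inductive hypothesis then applies to this inner series, and summing the result against the outer $m_n$-expansion produces the infinite-product factor $(x_n,x_n/a,qe/y_n,qa/ey_n;q)_\infty/(x_n/e,ex_n/a,q/y_n,qa/y_n;q)_\infty$ together with the $m_n$-slice of the multiple sum.

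The main obstacle is the bookkeeping across iterations needed to generate the partial sums $M_s=\sum_{\iota\le s}m_\iota$ in precisely the places required on the right. Each peeling step shifts $e$ by a factor of $q^{m_s}$, and after $n$ iterations these cumulative shifts must compose, via the standard splitting $(A;q)_{M_{s-1}+m_s}=(A;q)_{M_{s-1}}(Aq^{M_{s-1}};q)_{m_s}$, to reproduce exactly the telescoping product $\prod_{s=1}^{n-1}(ex_{s+1}/a,ey_{s+1}/a;q)_{M_s}/(qe/x_s,qe/y_s;q)_{M_s}\,(qa/x_{s+1}y_{s+1})^{M_s}$ together with the terminal $s=n$ factor involving $b,c,d,e$. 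Organising this cleanly would likely call for a Bailey-chain-style notation in which each peeling is a single chain step and the shifted parameters line up automatically; beyond \eqref{bailey} and \eqref{watson}, no new $q$-series identity is expected to be required.
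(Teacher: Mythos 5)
First, a point of comparison: the paper does not prove Lemma \ref{lemm-b} at all --- it is imported verbatim from Milne \citu{milne}{Theorem 1.7} --- so there is no in-paper argument for your proposal to match. Your outline (induction on $n$ with Bailey's $_6\psi_6$ as the $n=0$ base case, peeling off one pair $(x_n,y_n)$ per step) is indeed the recognized route to such multiple-series extensions, so the strategy is not wrong in spirit.

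However, as written the proposal has a genuine gap: the engine of the inductive step is never exhibited. You correctly observe that $x_ny_n=q^{1+N_n}a$ makes $(x_n,y_n;q)_k/(qa/x_n,qa/y_n;q)_k=(x_nq^{k-N_n};q)_{N_n}(y_nq^{k-N_n};q)_{N_n}/\big((x_nq^{-N_n};q)_{N_n}(y_nq^{-N_n};q)_{N_n}\big)$ a polynomial of degree $2N_n$ in $q^k$, but the step ``expand it as a finite combination indexed by $m_n$'' is precisely the theorem-bearing content, and you neither state the expansion nor identify the correct tool. Watson's formula \eqref{watson} transforms a terminating $_8\phi_7$ \emph{series} into a $_4\phi_3$; it does not by itself expand a ratio of $q$-shifted factorials into well-poised kernels of the form $(eq^{m_n};q)_k/(q^{1-m_n}a/e;q)_k$ times a power of $q^{-k}$. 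What is actually needed is a connection-coefficient expansion in that basis, whose coefficients are evaluated by the $q$-Pfaff--Saalsch\"utz theorem (equivalently the terminating very-well-poised $_6\phi_5$ sum) --- an identity you explicitly claim is not required. Without writing that expansion down, one cannot verify the two load-bearing claims of the induction: that the inner bilateral series really is the $(n-1)$-pair case with $e\to eq^{m_n}$ (note the convergence parameter must simultaneously shift from $q^{1-N}a^2/bcde$ to $q^{1-(N-N_n)}a^2/bcd(eq^{m_n})$ only when $m_n=N_n$, so the interchange of summation needs justification term by term), and that the cumulative shifts recombine into the stated factors $(qe/x_s,qe/y_s;q)_{M_s}$ with \emph{unshifted} $e$. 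So the proposal is a plausible plan, not a proof; to repair it, either carry out the single-pair expansion and the recursion explicitly, or simply cite Milne as the paper does.
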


\begin{thm}  \label{thm-c}
For $ab/x_\iota y_\iota=q^{N_\iota}$ and $|cde/abq^{N+1}|<1$ with
$N_\iota\in\mathbb{N}_0$ and $N=\sum_{\iota=1}^nN_\iota$, there
holds the multi-variable generalization of Ramanujan's reciprocity
formula:
 \bnm
&&\:\,\rho(a,b;c,d,e,\{x_\iota,y_\iota\}_{\iota=1}^n)-\rho(b,a;c,d,e,\{x_\iota,y_\iota\}_{\iota=1}^n)
 \\&&\nnm\xqdn\:=\:\,
  \Big(\frac{1}{b}\!-\!\frac{1}{a}\Big)\!
\ffnk{ccccc}{q}{q,qa/b,qb/a,c,d,e,cd/ab,ce/ab,de/ab}
 {-qa,-qb,-c/a,-c/b,-d/a,-d/b,-e/a,-e/b,cde/qab}_{\infty}
 \\&&\nnm\xqdn\:\times\:\,
  \prod_{\iota=1}^n\frac{1}{x_\iota}\ffnk{ccccc}{q}{-qa/x_\iota,-qb/x_\iota,qy_\iota/e,ey_\iota/ab}
 {e/x_\iota,qab/ex_\iota,-y_\iota/a,-y_\iota/b}_{\infty}
  \\&&\nnm\xqdn\:\times\:\,
  \sum_{\tilde{m}}\ffnk{ccccc}{q}{x_ny_n/ab}{q}_{m_n}
  \ffnk{ccccc}{q}{q/e,qab/ce,qab/de}{qx_n/e,qy_n/e,q^2ab/cde}_{M_n}q^{M_n}
 \\\nnm&&\xqdn\:\times\:
 \prod_{s=1}^{n-1}\ffnk{ccccc}{q}{x_sy_s/ab}{q}_{m_s}
  \ffnk{ccccc}{q}{qab/ex_{s+1},qab/ey_{s+1}}{qx_s/e,qy_s/e}_{M_s}
  \bigg(\frac{x_{s+1}y_{s+1}}{ab}\bigg)^{M_s},
 \enm
 where
 \bnm
  \rho(a,b;c,d,e,\{x_\iota,y_\iota\}_{\iota=1}^n)&&\xqdn\!\!=\frac{1}{b^n}
  \sum_{k=0}^{\infty}\Big(1-\frac{aq^{2k+1}}{b}\Big)\frac{(-1/b;q)_{k+1}}{(-qa;q)_k}
  \\&&\!\!\xqdn\times\:\frac{(-qa/c,-qa/d,-qa/e,\{-qa/x_\iota,-qa/y_\iota\}_{\iota=1}^n;q)_k}
  {(-c/b,-d/b,-e/b,\{-x_\iota/b,-y_\iota/b\}_{\iota=1}^n;q)_{k+1}}\Big(\frac{cde}{abq^{N+1}}\Big)^k
  \enm
 and the multiple sum runs over
$\tilde{m}=(m_1,m_2,\cdots,m_n)\in \mathbb{N}_{0}^n$ with their
partial sums denoted by $M_s=\sum_{\iota=1}^sm_s$.
\end{thm}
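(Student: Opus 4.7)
The strategy mirrors the proof of Theorem \ref{thm-a}, with Milne's multi-variable identity (Lemma \ref{lemm-b}) playing the role that the direct extension Lemma \ref{lemm-a} played there. The plan is to split the bilateral $_{2n+6}\psi_{2n+6}$-series on the left-hand side of Lemma \ref{lemm-b} into its non-negative and strictly-negative index parts, rewrite the second part as a unilateral series in the ``dual'' parameters, and then perform a single simultaneous substitution that converts the two unilateral pieces into $\rho(a,b;\ldots)$ and $\rho(b,a;\ldots)$.

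First I would write
\[
{_{2n+6}\psi_{2n+6}} \;=\; \sum_{k=0}^{\infty} T_k \;+\; \sum_{k=-\infty}^{-1} T_k,
\]
and apply the standard identity $(\alpha;q)_{-k} = q^{\binom{k+1}{2}}/((-\alpha)^k (q/\alpha;q)_k)$ to each of the $2n+6$ numerator shifted factorials and its counterpart for each of the $2n+6$ denominator factors. Because numerator and denominator have equal length, the sign factors and $q^{\binom{k+1}{2}}$ powers cancel, so the second sum becomes (after an index shift by one) an explicit constant prefactor of the familiar form $(1-a/b)(1-a/c)\cdots/(1-q/b)(1-q/c)\cdots$ multiplied by a unilateral very-well-poised series in the dual parameters $qb/a, qc/a, qd/a, qe/a, \{qx_\iota/a, qy_\iota/a\}_{\iota=1}^{n}$, exactly as computed in the $n=1$ case of Theorem \ref{thm-a}.

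Next, combining this split with Lemma \ref{lemm-b} and performing the simultaneous replacements
\[
a \to qa/b,\; b \to -q/b,\; c \to -qa/c,\; d \to -qa/d,\; e \to -qa/e,\; x_\iota \to -qa/x_\iota,\; y_\iota \to -qa/y_\iota,
\]
we expect to recover the theorem. Under this substitution the Milne balance $x_\iota y_\iota = q^{1+N_\iota}a$ becomes the well-poised condition $ab/(x_\iota y_\iota) = q^{N_\iota}$ stated in Theorem \ref{thm-c}, and the argument $(q^{1-N}a^2/bcde)^{k}$ becomes $(cde/abq^{N+1})^{k}$, matching the power appearing in $\rho$. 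A direct check shows that the two unilateral series from Step 1 collapse respectively to $\rho(a,b;c,d,e,\{x_\iota,y_\iota\})$ and $-\rho(b,a;c,d,e,\{x_\iota,y_\iota\})$, the factor $1/b^n$ in $\rho$ being produced by the $n$ new pairs of auxiliary parameters that now scale like $1/b$.

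The main technical burden will be bookkeeping through the infinite-product prefactor $\prod_{\iota=1}^{n}(x_\iota, x_\iota/a, qe/y_\iota, qa/ey_\iota;q)_\infty/(x_\iota/e, ex_\iota/a, q/y_\iota, qa/y_\iota;q)_\infty$ of Lemma \ref{lemm-b} through the substitution. After substitution one encounters $(-b/x_\iota;q)_\infty$ and $(-qy_\iota/b;q)_\infty$, whereas the theorem has $(-qb/x_\iota;q)_\infty$ and $(-y_\iota/b;q)_\infty$; these differ by the single factors $(1+b/x_\iota)$ and $(1+y_\iota/b)$, which must be collected together with the $(1-\star)$ terms generated by the negative-index split in Step 1 and with the residual very-well-poised factors $(q,qa,q/a,qa/bc,\ldots;q)_\infty$ of Lemma \ref{lemm-b} to produce the claimed overall prefactor $\prod_{\iota=1}^n(1/x_\iota)$. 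For the internal $n$-fold sum the component transformations $be/a \to q/e$, $ce/a \to qab/ce$, $de/a \to qab/de$, $ex_{s+1}/a \to qab/ex_{s+1}$, $qe/x_s \to qx_s/e$, $bcde/a^2 \to q^2ab/cde$ and $qa/(x_sy_s) \to x_sy_s/(ab)$ are all routine and match the multi-sum in the conclusion term-by-term, so no new analytic input is needed once the bookkeeping is complete.
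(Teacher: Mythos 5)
Your proposal follows exactly the paper's own route: split the $_{2n+6}\psi_{2n+6}$-series of Lemma \ref{lemm-b} into the parts $k\geq 0$ and $k\leq -1$, rewrite the negative part as a unilateral series in the dual parameters, and then apply the simultaneous substitution $a\to qa/b$, $b\to -q/b$, $c\to -qa/c$, $d\to -qa/d$, $e\to -qa/e$, $x_\iota\to -qa/x_\iota$, $y_\iota\to -qa/y_\iota$. The substitution list, the resulting balance condition $ab/x_\iota y_\iota=q^{N_\iota}$, and the bookkeeping of the infinite-product prefactors all match what the paper does, so this is essentially the same proof.
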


\begin{proof}
Splitting the $_{2n+6}\psi_{2n+6}$-series in Lemma \ref{lemm-b} into
two parts, we attain
 \bnm
&&\xqdn\qqdn{_{2n+6}\psi_{2n+6}}\ffnk{ccccccc}
 {q;\frac{q^{1-N}a^2}{bcde}}{q\sqrt{a},-q\sqrt{a},b,c,d,e,\{x_\iota,y_\iota\}_{\iota=1}^n}
  {\sqrt{a},-\sqrt{a},qa/b,qa/c,qa/d,qa/e,\{qa/x_\iota,qa/y_\iota\}_{\iota=1}^n}\\
&&\xqdn\qqdn\:\:=\:\sum_{k=0}^{\infty}\frac{1-aq^{2k}}{1-a}
\ffnk{ccccccc}{q}{b,c,d,e,\{x_\iota,y_\iota\}_{\iota=1}^n}
{qa/b,qa/c,qa/d,qa/e,\{qa/x_\iota,qa/y_\iota\}_{\iota=1}^n}_k\bigg(\frac{q^{1-N}a^2}{bcde}\bigg)^k\\
&&\xqdn\qqdn\:\:+\:\qdn\sum_{k=-\infty}^{-1}\qdn\frac{1-aq^{2k}}{1-a}
\ffnk{ccccccc}{q}{b,c,d,e,\{x_\iota,y_\iota\}_{\iota=1}^n}
 {qa/b,qa/c,qa/d,qa/e,\{qa/x_\iota,qa/y_\iota\}_{\iota=1}^n}_k\bigg(\frac{q^{1-N}a^2}{bcde}\bigg)^k\\
&&\xqdn\qqdn\:\:=\:\sum_{k=0}^{\infty}\frac{1-aq^{2k}}{1-a}
\ffnk{ccccccc}{q}{b,c,d,e,\{x_\iota,y_\iota\}_{\iota=1}^n}
 {qa/b,qa/c,qa/d,qa/e,\{qa/x_\iota,qa/y_\iota\}_{\iota=1}^n}_k\bigg(\frac{q^{1-N}a^2}{bcde}\bigg)^k\\
&&\xqdn\qqdn\:\:-\:\frac{(1-q^2/a)(1-a/b)(1-a/c)(1-a/d)(1-a/e)}{(1-a)(1-q/b)(1-q/c)(1-q/d)(1-q/e)}
 \bigg(\frac{q}{a}\bigg)^{n+1}\prod_{\iota=1}^n\frac{(1-a/x_\iota)(1-a/y_\iota)}{(1-q/x_\iota)(1-q/y_\iota)}\\
&&\xqdn\qqdn\:\:\times\:\sum_{k=0}^{\infty}\frac{1-q^{2k+2}/a}{1-q^2/a}
\ffnk{ccccccc}{q}{qb/a,qc/a,qd/a,qe/a,\{qx_\iota/a,qy_\iota/a\}_{\iota=1}^n}
 {q^2/b,q^2/c,q^2/d,q^2/e,\{q^2/x_\iota,q^2/y_\iota\}_{\iota=1}^n}_k\bigg(\frac{q^{1-N}a^2}{bcde}\bigg)^k.
 \enm
Combining the last relation with Lemma \ref{lemm-b} and then
replacing respectively the parameters
$a,b,c,d,e,\{x_\iota,y_\iota\}_{\iota=1}^n$ by
$qa/b,-q/b,-qa/c,-qa/d,-qa/e,\{-qa/x_\iota,-qa/y_\iota\}_{\iota=1}^n$,
we achieve Theorem \ref{thm-c} to complete the proof.
\end{proof}

When $n=0$, Theorem \ref{thm-c} reduces to \eqref{ma}. When $n=1$,
Theorem \ref{thm-c} reduces to Corollary \ref{corl-a} under the
replacement $x_1\to f, N_1\to n$.

Employing the substitutions $a\to-b/xy,b\to-b/y,c\to bc/xy^2,d\to
bd/xy^2,e\to b,\{x_\iota,y_\iota\}_{\iota=1}^n \to
\{bx_\iota/xy^2,by_\iota/xy^2\}_{\iota=1}^n$ in Theorem \ref{thm-c}
and using \eqref{relation}, we obtain the following transformation
formula.

\begin{thm}  \label{thm-d}
For $xy^2/x_\iota y_\iota=q^{N_\iota}$ and $|bcd/xy^2q^{N+1}|<1$
with $N_\iota\in\mathbb{N}_0$ and $N=\sum_{\iota=1}^nN_\iota$, there
holds the multi-variable generalization of Jacobi's triple product
identity and the quintuple product identity:
 \bnm
 &&\qdn\sum_{k=0}^{\infty}(1-q^{2k+1}/x)
\frac{(q/xy,q^{-k}b/y,q^{-k}c/y,q^{-k}d/y,\{q^{-k}x_\iota/y,q^{-k}y_\iota/y\}_{\iota=1}^n;q)_k}
{(y,b/xy,c/xy,d/xy,\{x_\iota/xy,y_\iota/xy\}_{\iota=1}^n;q)_{k+1}}
 \\&&\times\:q^{\frac{(2n+3)k^2+(2n+1)k}{2}}\,(-y/x^{n+1})^k\\
 &&-\:x^{n+1}\sum_{k=0}^{\infty}(1-q^{2k+1}x)
\frac{\sst(q/y,q^{-k}b/xy,q^{-k}c/xy,q^{-k}d/xy,\{q^{-k}x_\iota/xy,q^{-k}y_\iota/xy\}_{\iota=1}^n;q)_k}
{(xy,b/y,c/y,d/y,\{x_\iota/y,y_\iota/y\}_{\iota=1}^n;q)_{k+1}}
 \\&&\times\:q^{\frac{(2n+3)k^2+(2n+1)k}{2}}\,(-x^{n+2}y)^k
 \\&&=\:\,
 (-xy)^n\ffnk{ccccc}{q}{q,x,q/x,b,c,d,bc/xy^2,bd/xy^2,cd/xy^2}
 {y,xy,b/y,c/y,d/y,b/xy,c/xy,d/xy,bcd/qxy^2}_{\infty}\\
 &&\times\:\,
  \prod_{\iota=1}^n\frac{1}{x_\iota}\ffnk{ccccc}{q}{qy/x_\iota,qxy/x_\iota,y_\iota,qy_\iota/xy^2}
 {q/x_\iota,xy^2/x_\iota,y_\iota/y,y_\iota/xy}_{\infty}
  \\&&\times\:\,
  \sum_{\tilde{m}}\ffnk{ccccc}{q}{x_ny_n/xy^2}{q}_{m_n}
  \ffnk{ccccc}{q}{q/b,q/c,q/d}{qx_n/xy^2,qy_n/xy^2,q^2xy^2/bcd}_{M_n}q^{M_n}
 \enm
 \bnm
&&\times\:
 \prod_{s=1}^{n-1}\ffnk{ccccc}{q}{x_sy_s/xy^2}{q}_{m_s}
  \ffnk{ccccc}{q}{q/x_{s+1},q/y_{s+1}}{qx_s/xy^2,qy_s/xy^2}_{M_s}
  \bigg(\frac{x_{s+1}y_{s+1}}{xy^2}\bigg)^{M_s},
 \enm
where the multiple sum runs over $\tilde{m}=(m_1,m_2,\cdots,m_n)\in
\mathbb{N}_{0}^n$ with their partial sums denoted by
$M_s=\sum_{\iota=1}^sm_s$.
\end{thm}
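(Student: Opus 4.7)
The plan is to obtain Theorem \ref{thm-d} as a direct corollary of Theorem \ref{thm-c} by performing the advertised substitution and then repackaging the affected $q$-Pochhammer factors through the reflection identity \eqref{relation}, in exact parallel with the derivation of Theorem \ref{thm-b} from Theorem \ref{thm-a}.

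First I apply $a\to -b/(xy)$, $b\to -b/y$, $c\to bc/(xy^2)$, $d\to bd/(xy^2)$, $e\to b$, and $\{x_\iota,y_\iota\}_{\iota=1}^n\to\{bx_\iota/(xy^2),by_\iota/(xy^2)\}_{\iota=1}^n$ in Theorem \ref{thm-c}. A short check shows that under these replacements the integrality constraint $ab/(x_\iota y_\iota)=q^{N_\iota}$ becomes $xy^2/(x_\iota y_\iota)=q^{N_\iota}$, the convergence condition becomes $|bcd/(xy^2q^{N+1})|<1$, and the identity $\prod_\iota x_\iota y_\iota=(xy^2)^nq^{-N}$ holds, matching the hypotheses of Theorem \ref{thm-d}. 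Working inside the two $\rho$-sums, the denominator Pochhammer $(-c/b,-d/b,-e/b,\{-x_\iota/b,-y_\iota/b\}_\iota;q)_{k+1}$ turns into $(c/xy,d/xy,y,\{x_\iota/xy,y_\iota/xy\}_\iota;q)_{k+1}$, and the numerator Pochhammer $(-qa/c,-qa/d,-qa/e,\{-qa/x_\iota,-qa/y_\iota\}_\iota;q)_k$ turns into $(qy/c,qy/d,q/xy,\{qy/x_\iota,qy/y_\iota\}_\iota;q)_k$. Applying \eqref{relation} to each of the $2n+2$ factors of the shape $(qy/\alpha;q)_k$, and once more to the orphan ratio $(y/b;q)_{k+1}/(qb/xy;q)_k$, produces exactly the target numerator $(q/xy,q^{-k}b/y,q^{-k}c/y,q^{-k}d/y,\{q^{-k}x_\iota/y,q^{-k}y_\iota/y\}_\iota;q)_k$ and denominator $(y,b/xy,c/xy,d/xy,\{x_\iota/xy,y_\iota/xy\}_\iota;q)_{k+1}$ of Theorem \ref{thm-d}, at the cost of $2n+3$ sign changes and an overall weight $q^{(2n+3)\binom{k+1}{2}}$ together with reciprocal-parameter powers. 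Combining these contributions with the substituted argument $(bcd/(xy^2q^{N+1}))^k$, the prefactor $(-y/b)^n$ from $1/(-b/y)^n$, and the identity $\prod_\iota x_\iota y_\iota=(xy^2)^nq^{-N}$, all surplus $b,c,d,x_\iota,y_\iota$ powers collapse and the remaining weight consolidates to $q^{[(2n+3)k^2+(2n+1)k]/2}(-y/x^{n+1})^k$; the $\rho(b,a;\dots)$-sum is handled symmetrically with $x\leftrightarrow 1$, yielding the second sum with weight $(-x^{n+2}y)^k$ and outer factor $x^{n+1}$.

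The right-hand side of Theorem \ref{thm-c} translates term by term: the infinite-product prefactor becomes the displayed product of Theorem \ref{thm-d}, with the $(-xy)^n$ scalar emerging from $(1/b-1/a)^{-1}$ combined with $\prod_\iota 1/x_\iota$, and the multi-sum transforms transparently because its summand depends on the substituted parameters only through the invariants $xy^2/(x_\iota y_\iota)=q^{N_\iota}$, so the ratios $x_\iota y_\iota/ab$, $qab/(ex_s)$, $qab/(ey_s)$, etc., map directly to their Theorem \ref{thm-d} counterparts $x_\iota y_\iota/(xy^2)$, $qx_s/(xy^2)$, $qy_s/(xy^2)$, and so on. The main obstacle of the proof is the bookkeeping: one must keep precise track of the $2n+3$ sign flips and $q^{\binom{k+1}{2}}$ contributions from the repeated uses of \eqref{relation}, of the reciprocal-parameter powers $(y/\alpha)^k$ they generate, and of the explicit $q^{-(N+1)k}$ packed inside the substituted base argument, and verify that these all consolidate to exactly the exponent $[(2n+3)k^2+(2n+1)k]/2$ and the monomial $(-y/x^{n+1})^k$, with no stray $q^k$ or $q^{\binom{k+1}{2}}$ surviving. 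Once these cancellations are carried out, Theorem \ref{thm-d} follows directly from Theorem \ref{thm-c}.
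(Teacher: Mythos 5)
Your proposal is correct and follows exactly the route the paper takes: the paper derives Theorem \ref{thm-d} from Theorem \ref{thm-c} via precisely the substitutions $a\to -b/xy$, $b\to -b/y$, $c\to bc/xy^2$, $d\to bd/xy^2$, $e\to b$, $\{x_\iota,y_\iota\}\to\{bx_\iota/xy^2,by_\iota/xy^2\}$ together with the reflection identity \eqref{relation}, stating this in a single sentence and leaving the bookkeeping implicit. Your write-up simply makes explicit the sign, power-of-$q$, and parameter cancellations that the paper omits.
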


When $n=0$, Theorem \ref{thm-d} reduces to \citu{chu-a}{Theorem 9}.
When $n=1$, Theorem \ref{thm-d} reduces to Corollary \ref{corl-b}
under the replacement $x_1\to f, N_1\to n$.
\section{A multi-variable generalization of the
Askey-Wilson integral}
\begin{thm}  \label{thm-e}
For $u_\iota/v_\iota=q^{N_\iota}$ and $|abcd/q^{N+1}|<1$ with
$N_\iota\in\mathbb{N}_0$ and $N=\sum_{\iota=1}^nN_\iota$, there
holds the multi-variable generalization of the Askey-Wilson
integral:
 \bnm
\int_{0}^{\pi}\frac{h(\cos2\theta;1)}{h(\cos\theta;a,b,c,d)}
\prod_{\iota=1}^n\frac{h(\cos\theta;u_\iota)}{h(\cos\theta;v_\iota)}\,d\theta
&&\xqdn=\:\frac{2\pi}{1-abcd/q^{N+1}}\frac{(abcd/q;q)_{\infty}}{(q,ab,ac,ad,bc,bd,cd;q)_{\infty}}
\\&&\xqdn\times\:\prod_{\iota=1}^n\ffnk{ccccc}{q}{du_\iota,qu_\iota/d}{dv_\iota,qv_\iota/d}_{\infty}
\Bigg/\Omega(a,b,c,d,n),
 \enm
 where
 \bnm
  \Omega(a,b,c,d,n)&&\xqdn\!\!=\sum_{\tilde{m}}\ffnk{ccccc}{q}{v_n/u_n}{q}_{m_n}
  \ffnk{ccccc}{q}{q/ad,q/bd,q/cd}{q/du_n,qv_n/d,q^2/abcd}_{M_n}q^{M_n}
 \\&&\xqdn\:\times\:
 \prod_{s=1}^{n-1}\ffnk{ccccc}{q}{v_s/u_s}{q}_{m_s}
  \ffnk{ccccc}{q}{qu_{s+1}/d,q/dv_{s+1}}{q/du_s,qv_s/d}_{M_s}
  \bigg(\frac{v_{s+1}}{u_{s+1}}\bigg)^{M_s}
  \enm
 and the multiple sum runs over
$\tilde{m}=(m_1,m_2,\cdots,m_n)\in \mathbb{N}_{0}^n$ with their
partial sums denoted by $M_s=\sum_{\iota=1}^sm_s$.
\end{thm}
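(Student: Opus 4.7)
The plan is to adapt the classical residue-calculus proof of the Askey-Wilson integral, substituting Milne's multidimensional summation in Lemma \ref{lemm-b} for the role that Bailey's $_6\psi_6$-identity plays when $n=0$. Since $u_\iota=v_\iota q^{N_\iota}$ with $N_\iota\in\mb{N}_0$, each factor $h(\cos\theta;u_\iota)/h(\cos\theta;v_\iota)$ collapses to $1/(v_\iota e^{i\theta},v_\iota e^{-i\theta};q)_{N_\iota}$, so the multivariable integrand is the standard Askey-Wilson kernel multiplied by only finitely many extra linear denominators.

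Setting $z=e^{i\theta}$ and using the $z\leftrightarrow1/z$ symmetry, I would recast the integral as a contour integral $\tfrac{1}{2i}\oint_{|z|=1}F(z)\,dz/z$. Inside the unit disk, the integrand has simple poles at $z=aq^k,bq^k,cq^k,dq^k$ for $k\ge0$ together with $z=v_\iota q^j$ for $0\le j<N_\iota$ and $1\le\iota\le n$. Collecting these residues and parametrizing the $dq^k$-family bilaterally via the reflection $z\leftrightarrow1/z$, the residue sum assembles into a very-well-poised bilateral $_{2n+6}\psi_{2n+6}$ series of precisely the shape of the left-hand side of Milne's identity, under the parameter identification
\[B=\frac{\sqrt{qA}}{a},\quad C=\frac{\sqrt{qA}}{b},\quad D=\frac{\sqrt{qA}}{c},\quad E=\frac{\sqrt{qA}}{d},\quad X_\iota=\sqrt{qA}\,u_\iota,\quad Y_\iota=\frac{\sqrt{qA}}{v_\iota}.\]
Under this substitution, Milne's balancing condition $X_\iota Y_\iota=q^{1+N_\iota}A$ reduces to $u_\iota/v_\iota=q^{N_\iota}$, the argument $q^{1-N}A^2/(BCDE)$ simplifies to $abcd/q^{N+1}$, and a direct computation identifies the multi-sum on Milne's right-hand side with exactly $\Omega(a,b,c,d,n)$ as introduced in the theorem statement.

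Applying Lemma \ref{lemm-b} then evaluates the residue sum in closed form. A routine simplification, using $(abcd/q;q)_\infty=(1-abcd/q)(abcd;q)_\infty$ together with the cancellation of the $\sqrt{A}$-dependent factors between Milne's infinite-product output and the prefactors generated by the residues, produces the claimed prefactor $\tfrac{2\pi(abcd/q;q)_\infty}{(1-abcd/q^{N+1})(q,ab,ac,ad,bc,bd,cd;q)_\infty}\prod_{\iota=1}^n\tfrac{(du_\iota,qu_\iota/d;q)_\infty}{(dv_\iota,qv_\iota/d;q)_\infty}$ multiplying $1/\Omega(a,b,c,d,n)$.

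The main obstacle lies in the second step: verifying that the finitely many residues at $z=v_\iota q^j$ package cleanly into the $X_\iota,Y_\iota$ slots of Milne's series alongside the bilateral $k$-indexed $d$-pole residues, and that all $A$-dependent artifacts cancel to leave an $A$-independent identity. As a sanity check, the $n=0$ case reduces Milne to Bailey's $_6\psi_6$ and recovers the classical Askey-Wilson integral \eqref{askey-wilson}, since then $\Omega=1$ and the $\iota$-product is empty.
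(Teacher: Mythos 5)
Your route runs the argument in the opposite direction from the paper and, as you yourself note, leaves the decisive step unproved. The paper does not evaluate the integral by residues; it uses the constant-term method of Chu and Ma \cito{chu-b}. One substitutes $a\to qz^2$, $b\to qz/a$, $c\to qz/b$, $d\to qz/c$, $e\to qz/d$ and $\{x_\iota,y_\iota\}_{\iota=1}^n\to\{qzu_\iota,qz/v_\iota\}_{\iota=1}^n$ into Lemma \ref{lemm-b} and multiplies by the rational factor $(1-z^2)(1-qz^2)\prod_{\iota}(1-zu_\iota)\big/\big((1-az)(1-bz)(1-cz)(1-dz)\prod_{\iota}(1-zv_\iota)\big)$. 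The product side of Milne's identity then becomes exactly the integrand (as a function of $z=e^{i\theta}$) times $z$-free constants, among them $\Omega(a,b,c,d,n)$, while the series side, split into its $k\ge0$ and $k<0$ halves, is seen to be regular on $0<|z|<\infty$; integrating over the unit circle therefore just extracts its constant Laurent coefficient, namely $\sum_{k\ge0}(abcd/q^{N+1})^k=1/(1-abcd/q^{N+1})$. No residue is ever computed.

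Your version has two concrete gaps. First, the poles $aq^k,bq^k,cq^k,dq^k$ of the integrand accumulate at $z=0$, so the contour integral over $|z|=1$ is not the sum of the residues inside the disk without a separate argument that the integral over a shrinking circle $|z|=\varepsilon_m$ tends to zero along a suitable sequence; you give none. Second, and more seriously, the residues at the $4+n$ pole families form $4+n$ separate unilateral series, not one bilateral $_{2n+6}\psi_{2n+6}$. Under your (correct) parameter identification $A=qz^2$, $B=qz/a,\dots$, Milne's bilateral series is a Laurent series in the free variable $z$ --- in effect a partial-fraction expansion of the integrand in which each single residue of the integrand is an infinite sum over the series index --- so the claimed match ``residue sum equals Milne's left member'' does not typecheck term by term, and making it rigorous would amount to reproving the very expansion you want to invoke. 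This is exactly the step you flag as ``the main obstacle''; the constant-term argument sidesteps it entirely.
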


\begin{proof}
Performing the replacements $a\to qz^2,b\to qz/a,c\to qz/b, d\to
qz/c, e\to
qz/d,\{x_\iota,y_\iota\}_{\iota=1}^n\to\{qzu_\iota,qz/v_\iota\}_{\iota=1}^n$
in Lemma \ref{lemm-b} and then multiplying across that equation by
\[\frac{(1-z^2)(1-qz^2)}{(1-az)(1-bz)(1-cz)(1-dz)}\prod_{\iota=1}^n\frac{1-zu_\iota}{1-zv_\iota},\]
we get
 \bmn
&&\xqdn\frac{(1-z^2)(1-qz^2)}{(1-az)(1-bz)(1-cz)(1-dz)}\prod_{\iota=1}^n\frac{1-zu_\iota}{1-zv_\iota}
 \nnm\\\nnm&&\xqdn\:\times\:{_{2n+6}\psi_{2n+6}}\ffnk{ccccccc}
 {q;\frac{abcd}{q^{N+1}}}{q^{\frac{3}{2}}z,-q^{\frac{3}{2}}z,qz/a,qz/b,qz/c,qz/d,\{qzu_\iota,qz/v_\iota\}_{\iota=1}^n}
 {q^{\frac{1}{2}}z,-q^{\frac{1}{2}}z,qaz,qbz,qcz,qdz,\{qz/u_\iota,qzv_\iota\}_{\iota=1}^n}\\
&&\xqdn=\frac{(z^2,1/z^2;q)_{\infty}}{(az,a/z,bz,b/z,cz,c/z,dz,d/z;q)_{\infty}}
\prod_{\iota=1}^n\ffnk{ccccc}{q}{zu_\iota,u_\iota/z}{zv_\iota,v_\iota/z}_{\infty}
 \nnm \\\nnm&&\xqdn\times\:
\frac{(q,ab,ac,ad,bc,bd,cd;q)_{\infty}}{(abcd/q;q)_{\infty}}
\prod_{\iota=1}^n\ffnk{ccccc}{q}{dv_\iota,qv_\iota/d}{du_\iota,qu_\iota/d}_{\infty}
 \emn
 \bmn
 &&\xqdn\times\: \sum_{\tilde{m}}\ffnk{ccccc}{q}{v_n/u_n}{q}_{m_n}
  \ffnk{ccccc}{q}{q/ad,q/bd,q/cd}{q/du_n,qv_n/d,q^2/abcd}_{M_n}q^{M_n}
 \nnm\\&&\xqdn\times\:\label{integral-before}
 \prod_{s=1}^{n-1}\ffnk{ccccc}{q}{v_s/u_s}{q}_{m_s}
  \ffnk{ccccc}{q}{qu_{s+1}/d,q/dv_{s+1}}{q/du_s,qv_s/d}_{M_s}
  \bigg(\frac{v_{s+1}}{u_{s+1}}\bigg)^{M_s}
 \emn
provided  $u_\iota/v_\iota=q^{N_\iota}$ and $|abcd/q^{N+1}|<1$. The
expression on the left hand side of \eqref{integral-before} can be
reformulated as
 \bmn
&&\xqdn\xqdn\qdn
f(z)=\frac{(1-z^2)(1-qz^2)}{(1-az)(1-bz)(1-cz)(1-dz)}\prod_{\iota=1}^n\frac{1-zu_\iota}{1-zv_\iota}
 \nnm\\\nnm&&\xqdn\:\times\:\sum_{k=0}^{\infty}\frac{1-q^{2k+1}z^2}{1-qz^2}
\ffnk{ccccccc}{q}{qz/a,qz/b,qz/c,qz/d,\{qzu_\iota,qz/v_\iota\}_{\iota=1}^n}
{qaz,qbz,qcz,qdz,\{qz/u_\iota,qzv_\iota\}_{\iota=1}^n}_k\bigg(\frac{abcd}{q^{N+1}}\bigg)^k\\
&&\xqdn\:+\:\frac{(1-z^2)(1-qz^2)}{(1-az)(1-bz)(1-cz)(1-dz)}\prod_{\iota=1}^n\frac{1-zu_\iota}{1-zv_\iota}
 \nnm\\\nnm&&\xqdn\:\times\sum_{k=-\infty}^{-1}\qdn\frac{1-q^{2k+1}z^2}{1-qz^2}
\ffnk{ccccccc}{q}{qz/a,qz/b,qz/c,qz/d,\{qzu_\iota,qz/v_\iota\}_{\iota=1}^n}
{qaz,qbz,qcz,qdz,\{qz/u_\iota,qzv_\iota\}_{\iota=1}^n}_k\bigg(\frac{abcd}{q^{N+1}}\bigg)^k\\
&&\xqdn\:=\sum_{k=0}^{\infty}(1-z^2)(1-q^{2k+1}z^2)
 \frac{(qz/a,qz/b,qz/c,qz/d;q)_k}{(az,bz,cz,dz;q)_{k+1}}\nnm\\\nnm
 &&\xqdn\:\times\prod_{\iota=1}^n\frac{(zu_\iota;q)_{k+1}(qz/v_\iota)_k}{(zv_\iota;q)_{k+1}(qz/u_\iota)_k}
  \bigg(\frac{abcd}{q^{N+1}}\bigg)^k
  \nnm\\\nnm&&\xqdn\:+\sum_{k=0}^{\infty}(1-z^{-2})(1-q^{2k+1}z^{-2})
 \frac{(q/az,q/bz,q/cz,q/dz;q)_k}{(a/z,b/z,c/z,d/z;q)_{k+1}}\\
 &&\xqdn\:\times\prod_{\iota=1}^n\frac{(u_\iota/z;q)_{k+1}(q/zv_\iota)_k}{(v_\iota/z;q)_{k+1}(q/zu_\iota)_k}
  \bigg(\frac{abcd}{q^{N+1}}\bigg)^k. \label{left-member}
 \emn
Therefore $f(z)$ is regular within $0<|z|<\infty$ and can be
expanded into a laurent series at $z=0$. Letting $z=e^{i\theta}$ in
\eqref{left-member} with $\theta\in \mathbb{R}$ and then integrating
$f(z)$ over $-\pi\leq\theta\leq\pi$, it is not difficult to see that
 \bnm
 \int_{-\pi}^{\pi}f(e^{i\theta})d\theta=2\pi[z^0]f(z)=4\pi\sum_{k=0}^{\infty}\bigg(\frac{abcd}{q^{N+1}}\bigg)^k
 =\frac{4\pi}{1-abcd/q^{N+1}}.
 \enm
 Instead, we derive from the expression on the right hand side of \eqref{integral-before} that
 \bnm
 \int_{-\pi}^{\pi}f(e^{i\theta})d\theta&&\xqdn=2\int_{0}^{\pi}\frac{h(\cos2\theta;1)}{h(\cos\theta;a,b,c,d)}
\prod_{\iota=1}^n\frac{h(\cos\theta;u_\iota)}{h(\cos\theta;v_\iota)}\,d\theta
 \\&&\xqdn\times\:
\frac{(q,ab,ac,ad,bc,bd,cd;q)_{\infty}}{(abcd/q;q)_{\infty}}
\prod_{\iota=1}^n\ffnk{ccccc}{q}{dv_\iota,qv_\iota/d}{du_\iota,qu_\iota/d}_{\infty}
 \\&&\xqdn\times\:
 \sum_{\tilde{m}}\ffnk{ccccc}{q}{v_n/u_n}{q}_{m_n}
  \ffnk{ccccc}{q}{q/ad,q/bd,q/cd}{q/du_n,qv_n/d,q^2/abcd}_{M_n}q^{M_n}
 \\&&\xqdn\times\:
 \prod_{s=1}^{n-1}\ffnk{ccccc}{q}{v_s/u_s}{q}_{m_s}
  \ffnk{ccccc}{q}{qu_{s+1}/d,q/dv_{s+1}}{q/du_s,qv_s/d}_{M_s}
  \bigg(\frac{v_{s+1}}{u_{s+1}}\bigg)^{M_s}.
 \enm
Equating the last two equations, we attain Theorem \ref{thm-d} to
finish the proof.
\end{proof}

When $n=0$, Theorem \ref{thm-e} reduces to \eqref{askey-wilson}
exactly.

Taking $n=1$ in Theorem \ref{thm-e}, we achieve the interesting
result under the replacement $v_1\to u, N_1\to n$.

\begin{corl}  \label{corl-c}
For $|abcd/q^{n+1}|<1$, there holds the
 generalization of the Askey-Wilson integral:
 \bnm
\int_{0}^{\pi}\frac{h(\cos2\theta;1)h(\cos\theta;uq^n)}{h(\cos\theta;a,b,c,d,u)}\,d\theta
&&\xqdn=\:\frac{2\pi}{1-abcd/q^{n+1}}\frac{(abcd/q;q)_{\infty}}{(q,ab,ac,ad,bc,bd,cd;q)_{\infty}}
\\&&\xqdn\times\:\frac{1}{(du,qu/d;q)_n}\Bigg/{_4\phi_3}\ffnk{ccccccc}
 {q;q}{q^{-n},q/ad,q/bd,q/cd}{q^{1-n}/du,qu/d,q^2/abcd}.
 \enm
\end{corl}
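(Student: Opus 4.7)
The plan is to obtain Corollary \ref{corl-c} as the $n=1$ specialization of Theorem \ref{thm-e}, followed by the prescribed relabeling $v_1\to u$ and $N_1\to n$. The constraint $u_1/v_1=q^{N_1}$ then forces $u_1=uq^n$, so the integrand factor $h(\cos\theta;u_1)/h(\cos\theta;v_1)$ on the left-hand side of Theorem \ref{thm-e} becomes $h(\cos\theta;uq^n)/h(\cos\theta;u)$, which is exactly the extra ratio inserted in the corollary. Moreover, the convergence hypothesis $|abcd/q^{N+1}|<1$ of Theorem \ref{thm-e}, with $N=N_1=n$, reduces to $|abcd/q^{n+1}|<1$, matching the stated assumption.

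Next I would simplify the infinite-product prefactor. For the single pair $(u_1,v_1)=(uq^n,u)$,
\[\prod_{\iota=1}^{1}\ffnk{ccccc}{q}{du_\iota,qu_\iota/d}{dv_\iota,qv_\iota/d}_{\infty} = \ffnk{ccccc}{q}{duq^n,q^{n+1}u/d}{du,qu/d}_{\infty},\]
and the elementary telescoping identity $(x;q)_\infty=(x;q)_n(xq^n;q)_\infty$ collapses this quotient to $1/(du,qu/d;q)_n$, which is precisely the factor displayed in the target formula.

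Finally, I would identify $\Omega(a,b,c,d,1)$ as the claimed ${}_4\phi_3$-series. Since $\prod_{s=1}^{0}$ is empty, only the outer $m_1$-summand of $\Omega$ survives, with $M_1=m_1$. Substituting $v_1/u_1=q^{-n}$, $q/(du_1)=q^{1-n}/(du)$, and $qv_1/d=qu/d$ converts the remaining sum directly into
\[{_4\phi_3}\ffnk{ccccccc}{q;q}{q^{-n},q/ad,q/bd,q/cd}{q^{1-n}/du,qu/d,q^2/abcd}.\]
Assembling these three reductions yields Corollary \ref{corl-c}. The whole argument is essentially bookkeeping; no serious obstacle arises beyond careful parameter matching and the single telescoping identity above.
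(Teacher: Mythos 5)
Your proposal is correct and follows exactly the route the paper takes: Corollary \ref{corl-c} is obtained by setting $n=1$ in Theorem \ref{thm-e} and relabeling $v_1\to u$, $N_1\to n$, so that $u_1=uq^n$. The paper states this in one line without details; your verification of the telescoping of the infinite products to $1/(du,qu/d;q)_n$ and the collapse of $\Omega(a,b,c,d,1)$ to the stated ${}_4\phi_3$ is the correct bookkeeping behind that line.
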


Letting $d\to q/a,\{N_\iota\}_{\iota=1}^n\to\{m_\iota\}_{\iota=1}^n,
N\to m$ in Theorem \ref{thm-e}, we recover the known result due to
Chu and Ma \citu{chu-b}{Theorem 5}.

\begin{corl}  \label{corl-e}
For $u_\iota/v_\iota=q^{m_\iota}$ and $|bc/q^{m}|<1$ with
$m_\iota\in\mathbb{N}_0$ and $m=\sum_{\iota=1}^nm_\iota$, there
holds the integral formula:
 \bnm
&&\xqdn\int_{0}^{\pi}\frac{h(\cos2\theta;1)}{h(\cos\theta;a,q/a,b,c)}
\prod_{\iota=1}^n\frac{h(\cos\theta;u_\iota)}{h(\cos\theta;v_\iota)}\,d\theta\\
&&\xqdn=\:\frac{2\pi/(1-q^{-m}bc)}{(q,q,ab,ac,qb/a,qc/a;q)_{\infty}}
\prod_{\iota=1}^n\ffnk{ccccc}{q}{au_\iota,qu_\iota/a}{av_\iota,qv_\iota/a}_{\infty}.
 \enm
\end{corl}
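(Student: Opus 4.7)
The plan is to derive Corollary \ref{corl-e} directly from Theorem \ref{thm-e} by specializing $d \to q/a$ (and identifying $N_\iota$ with $m_\iota$ and $N$ with $m$), as suggested in the paragraph preceding the statement. All hypotheses are preserved: the constraint $u_\iota/v_\iota = q^{m_\iota}$ is exactly $u_\iota/v_\iota = q^{N_\iota}$, and the convergence condition becomes $|abcd/q^{N+1}| = |bc/q^m|<1$.

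First I would simplify the prefactor on the right-hand side of Theorem \ref{thm-e}. Under $d=q/a$ we have $ad=q$, $bd=qb/a$, $cd=qc/a$, and $abcd/q=bc$. Thus the factor $(abcd/q;q)_\infty=(bc;q)_\infty$ cancels against $(bc;q)_\infty$ in the denominator, while $(ad;q)_\infty=(q;q)_\infty$ produces the second $(q;q)_\infty$. The ratios in the product transform as $du_\iota=qu_\iota/a$, $qu_\iota/d=au_\iota$, $dv_\iota=qv_\iota/a$, $qv_\iota/d=av_\iota$, so
\[
\prod_{\iota=1}^n\ffnk{ccccc}{q}{du_\iota,qu_\iota/d}{dv_\iota,qv_\iota/d}_{\infty}
\;\longrightarrow\;
\prod_{\iota=1}^n\ffnk{ccccc}{q}{au_\iota,qu_\iota/a}{av_\iota,qv_\iota/a}_{\infty},
\]
which matches the target. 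The integrand on the left transforms as $h(\cos\theta;d)=h(\cos\theta;q/a)$, and a routine identity gives the required $h(\cos\theta;a,q/a,b,c)$ in the denominator.

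The key step is to show that the multiple sum $\Omega(a,b,c,q/a,n)$ collapses to $1$. Inspecting its definition, the factor $(q/ad;q)_{M_n}$ becomes $(1;q)_{M_n}$, which vanishes the moment $M_n\geq 1$. Since $M_n=\sum_{s=1}^n m_s$ with each $m_s\in\mathbb{N}_0$, the only surviving term in the multiple sum is $\tilde{m}=(0,\dots,0)$; at this index each partial sum $M_s$ is zero and every $q$-Pochhammer symbol reduces to an empty product, so the term equals $1$. Hence $\Omega(a,b,c,q/a,n)=1$, and dividing by $\Omega$ is trivial.

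Combining the simplified prefactor with $\Omega=1$ yields exactly the formula in Corollary \ref{corl-e}. There is no substantive obstacle: everything reduces to bookkeeping of the parameter substitution and the observation that $(1;q)_k$ vanishes for $k\geq 1$, which forces the multiple sum to truncate to a single nonzero term.
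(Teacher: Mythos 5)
Your proposal is correct and follows exactly the route the paper takes: specialize $d\to q/a$ in Theorem \ref{thm-e}, note that $(q/ad;q)_{M_n}=(1;q)_{M_n}$ kills every term of $\Omega$ except $\tilde{m}=(0,\dots,0)$ so that $\Omega=1$, and simplify the remaining infinite products. The paper merely asserts the specialization without spelling out the collapse of $\Omega$, so your write-up supplies the same argument in slightly more detail.
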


\textbf{Acknowledgments}

The work is supported by the Natural Science Foundations of China
(Nos. 11201241 and 11201291).


\end{document}